\newtheorem{theorem}{Theorem}[section]
\newtheorem{problem}[theorem]{Problem}
\newtheorem{conjecture}[theorem]{Conjecture}
\newtheorem{remark}[theorem]{Remark}
\newtheorem{proposition}[theorem]{Proposition}
\newtheorem{definition}[theorem]{Definition}
\begin{document}

\title{Extension properties of orbit spaces for proper  actions  revisited}

\author{Sergey A. Antonyan\\[3pt]}

\address{Departamento de  Matem\'aticas,
Facultad de Ciencias, Universidad Nacional Aut\'onoma de M\'exico,
 04510 M\'exico City, M\'exico.}
\email{antonyan@unam.mx}

\begin{abstract} Let  $G$ be a locally compact Hausdorff group. We   study orbit spaces   of equivariant absolute neighborhood extensors ($G$-${\rm ANE}$'s) for the class
 of all proper $G$-spaces  that are metrizable by a $G$-invariant metric. 
 We  prove that if a  $G$-space $X$ is a  $G$-${\rm ANE}$ and all  $G $-orbits in $X$ are metrizable, then  the $G$-orbit space  $X/G$ is an {\rm ANE}. If $G$ is either a Lie group or an almost connected  group, then  for any  closed  normal subgroup $H$ of $G$,   the $H$-orbit space  $X/H$ is a $G/H$-{\rm ANE} provided that all $H$-orbits in $X$ are metrizable.
\end{abstract}

\thanks {{\it 2020 Mathematics Subject Classification}.  54C55; 54C20; 54H15; 57S20}
\thanks{{\it  Key words and phrases}. Proper $G$-space; $G$-{\rm ANE};  Orbit space; Slice} 
\thanks{The author was supported  by grants  IN-100123 from PAPIIT (UNAM) and  A1-S-7897 from CONACYT}

\maketitle
\markboth{SERGEY A. ANTONYAN}{ORBIT SPACES  OF $G$-${\rm ANE}$'S}

\section {Introduction}\label{intro}

In  \cite[Question 4]{ant:aspects}, 
the following problem about extension properties of orbit spaces  was  posed.

\begin{problem} [Orbit Space Problem]\label{P:0} Let $G$ be a compact group and $X$ a $G$-{\rm ANR}. Is then the orbit space $X/G$ an  {\rm ANR}?
\end{problem}

This problem  has been  solved affirmatively 
first in \cite[Theorem 8]{ant:88} for $G$ a compact metrizable group.
   The result was extended  in    \cite[Theorem 1.1]{ant:99}  to the case of any
 compact Hausdorff group $G$ and any $X\in G$-A(N)E with all orbits $G(x)\subset X$ metrizable. More precisely,  the following more general result was proved in \cite[Theorem 1.1]{ant:99}. 
  
\begin{theorem}[\cite{ant:99}]\label{T:compactcase}
Let $G$ be a compact group and  $H$  a closed normal subgroup of $G$. Suppose $X$ is a $G$-space such that all  $H$-orbits in $X$ are metrizable. 
 If  $X$ is a $G$-${\rm ANE}$ (respectively, a $G$-${\rm AE}$),  then the $H$-orbit space $X/H$ is a $G/H$-${\rm ANE}$ (respectively, a $G/H$-${\rm AE}$). 
\end{theorem}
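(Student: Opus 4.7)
\emph{Approach.} My plan is to linearize the problem: first equivariantly embed $X$ into a Banach $G$-space to reduce to proving that the $H$-orbit space of a linear $G$-model is a $G/H$-${\rm AE}$, and then solve the equivariant extension problem over that model by a pullback lifting.

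\emph{Reduction to a linear model.} Using the equivariant Arens--Eells-type embedding theorem (available for compact $G$ via Haar averaging), I embed $X$ as a closed $G$-invariant subset of a Banach $G$-space $L$ on which $G$ acts by linear isometries; since $L$ is a $G$-${\rm AE}$, the hypothesis $X\in G$-${\rm ANE}$ supplies a $G$-retraction $r\colon U\to X$ from an open $G$-invariant neighborhood $U$ of $X$ in $L$ (one takes $U=L$ in the $G$-${\rm AE}$ case). Because $H$ is normal in $G$, the orbit map $\pi_H\colon L\to L/H$ is $G$-equivariant and factors its action through $G/H$ on the target, so $r$ descends to a $G/H$-retraction $\bar r\colon U/H\to X/H$ with $U/H$ open in $L/H$. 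Hence the problem reduces to proving that $L/H$ is a $G/H$-${\rm AE}$.

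\emph{Extending equivariant maps into $L/H$.} Given a $G/H$-space $Z$, a closed $G/H$-invariant $A\subseteq Z$, and a $G/H$-equivariant map $f\colon A\to L/H$, I would form the pullback
\[
\widetilde A \;:=\; A\times_{L/H} L \;=\; \{(a,v)\in A\times L : f(a)=\pi_H(v)\},
\]
which carries a $G$-action via $g\cdot(a,v)=(gH\cdot a,\,g\cdot v)$, satisfies $\widetilde A/H\cong A$ canonically, and admits the second projection $\widetilde f\colon \widetilde A\to L$ as a $G$-equivariant lift of $f$. If one can produce a $G$-space $\widetilde Z\supseteq \widetilde A$ with $\widetilde A$ closed in $\widetilde Z$ and $\widetilde Z/H\cong Z$, then $\widetilde f$ extends to a $G$-equivariant map $\widetilde F\colon \widetilde Z\to L$ by the $G$-${\rm AE}$ property of $L$, and $\pi_H\circ\widetilde F$ descends to the required extension $F\colon Z\to L/H$ of $f$.

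\emph{Main obstacle.} The decisive difficulty is constructing $\widetilde Z$, since $Z$ may carry $G/H$-orbit types that have no counterpart in $A$ to pull back along. I would build $\widetilde Z$ locally by invoking the Mostow--Palais slice theorem for the compact group $H$ acting on $L$, whose relevant $H$-orbits are metrizable by hypothesis; this yields $H$-equivariant local trivializations of $\pi_H$ over slice neighborhoods, which I then glue using a $G/H$-invariant partition of unity on $Z$ obtained from paracompactness and Haar averaging over $G/H$. The metrizability of $H$-orbits enters precisely here, to legitimize the slice trivializations. Once $\widetilde Z$ is in hand, the same scheme taken with $U=L$ handles the $G$-${\rm AE}$ case, yielding both statements of the theorem.
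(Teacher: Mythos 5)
First, note that the paper does not reprove Theorem~\ref{T:compactcase}: it is quoted from \cite{ant:99}, and the analogous results proved here (Theorems~\ref{T:0}, \ref{T:almost}, \ref{T:00}) all follow a different route than yours, namely: cover $X$ by tubes $G(S)\cong G\times_K S$ over slices for compact (large) subgroups $K$, identify $(G\times_K S)/H\cong G/H\times_K S$, show $S\in K$-ANE by a retraction of the twisted product onto $S$, and finish with Proposition~\ref{P:37} and the open union theorem. Your plan --- linearize, reduce to the model space $L$, and lift the extension problem through the orbit map --- is instead the strategy of Section~\ref{lifting} of the paper, and it has genuine gaps as you have set it up.

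The first gap is in the reduction. The theorem assumes only that the $H$-orbits of $X$ are metrizable, not that $X$ is; the Arens--Eells-type embedding into a Banach $G$-space, and the subsequent application of the $G$-ANE property of $X$ inside $L$ (which requires the ambient space to lie in $G$-$\mathcal M$), both need $X$ metrizable. Worse, even granting the embedding, reducing to ``$L/H$ is a $G/H$-AE'' discards the hypothesis: the metrizability assumption concerns the $H$-orbits of $X$, and the $H$-orbits of $L$ are not ``metrizable by hypothesis'' as you assert --- their metrizability would itself require an argument about the specific linear model, and the theorem is simply false without some such orbit condition. The second and decisive gap is exactly the one you flag as the ``main obstacle'': the construction of $\widetilde Z$. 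Gluing slice-neighborhood trivializations of $\pi_H\colon L\to L/H$ by a $G/H$-invariant partition of unity is not a construction --- $\pi_H$ is not a locally trivial bundle (orbit types vary), there is no linear structure on the local models $G\times_K S$ in which to average two gluings, exact slices with stabilizer slicing groups do not exist for non-Lie compact groups (one must use large subgroups, as in Theorem~\ref{T:331}), and over points of $Z$ far from $A$ there is no canonical choice of fiber at all. The paper's own lifting theorem (Theorem~\ref{T:lifting1}) obtains $\widetilde Z$ as a pullback of $L$ along a $G/H$-extension $F\colon U\to L/H$ of $f^{-1}$, which \emph{presupposes} that $L/H$ is already known to be a $G/H$-ANE; so the lifting mechanism cannot be used to prove the orbit-space theorem for the model space itself without circularity. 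To close your argument you would have to replace the gluing step by the slice/twisted-product decomposition of $L$ (or of $X$ directly), i.e., by the paper's actual method.
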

  
  This result has been widely applied to the study of the topology of  Banach-Mazur compacta  (see \cite{ant:98}, \cite{ant:00}, \cite{ant:03}).  Other applications  can be found in \cite{ant:88}, \cite{bas:94} and \cite{zhur:90}.
  
\medskip

The possibility of a further extension of Theorem \ref{T:compactcase}
 to the case of proper actions (in the sense of R. Palais \cite{pal:61}) of locally compact groups has been studied in several papers by  the author \cite{ant:99}, \cite{ant:jap}, \cite{ant:proper}, \cite{ant:fund09}.

\medskip

The main purpose of this article is to carefully review the proofs of various subsequent generalizations of Theorem \ref{T:compactcase}  and make  corrections and improvements where necessary. Along the  way we will provide very short proofs for  the following three theorems about preservation of the  extension properties by the orbit space functor.   More general versions of these theorems are also presented in Section \ref{lifting}.

\begin{theorem}[Proper actions of locally compact groups and $H=G$]\label{T:0}
Let $G$ be a locally compact  group and $X$   a proper $G$-space such that  all  $G$-orbits in $X$ are metrizable. 
 If  $X$ is a $G$-${\rm ANE}$ then the $G$-orbit space  $X/G$ is an {\rm ANE}. 
\end{theorem}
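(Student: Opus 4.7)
\medskip

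\noindent\textbf{Proof plan.} My plan is to reduce the statement to the compact case already recorded as Theorem \ref{T:compactcase} by means of the slice theorem of Palais, and then glue the local information together through a Hanner-type local-to-global principle.

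\smallskip

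First, I would verify that the orbit space $X/G$ is metrizable. Since $X$ is metrizable by a $G$-invariant metric and every $G$-orbit is metrizable, the orbit projection $\pi\colon X\to X/G$ is closed and the quotient is metrizable by a standard argument (the hypothesis on the orbits is what eliminates the cardinal obstructions that may appear in general orbit spaces of proper actions of locally compact groups). Metrizability of $X/G$ will be needed for the concluding local-to-global step, so it is essential that the hypothesis on orbits be recorded at the outset.

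\smallskip

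The heart of the proof is the local analysis. Fix $x\in X$. Because the action is proper and $G$ is locally compact, the stabilizer $K=G_x$ is compact, and by Palais' slice theorem there exists a $K$-invariant slice $S\ni x$ such that the tube $U=G\cdot S$ is open in $X$ and $G$-homeomorphic to the twisted product $G\times_K S$. Consequently the orbit space $U/G$ is canonically homeomorphic to $S/K$, so $X/G$ is locally homeomorphic to quotients of $K$-spaces by compact group actions. The key intermediate assertion I would establish is that the slice $S$ is a $K$-\textnormal{ANE}. This is where one uses the hypothesis that $X$ is a $G$-\textnormal{ANE}: slices of $G$-\textnormal{ANE}'s for compact isotropy groups inherit the extension property with respect to the ambient class restricted to $K$-actions, since any $K$-equivariant map into $S$ extends to a $G$-equivariant map into $G\times_K S\cong U$, and extendability across the ambient $G$-\textnormal{ANE} pulls back to $K$-extendability across $S$. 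Moreover, the $K$-orbits in $S$ coincide with the intersections of $G$-orbits with $S$, hence are metrizable.

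\smallskip

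With $S$ recognized as a $K$-\textnormal{ANE} whose $K$-orbits are metrizable and with $K$ compact, Theorem \ref{T:compactcase} (applied with $H=K$) yields that the orbit space $S/K$ is an \textnormal{ANE}. Transporting this through the homeomorphism $U/G\cong S/K$ shows that each point of $X/G$ has an \textnormal{ANE}-neighborhood in $X/G$; that is, $X/G$ is a local \textnormal{ANE}. Finally, Hanner's theorem (every metrizable local \textnormal{ANE} is an \textnormal{ANE}) combined with the metrizability of $X/G$ established above concludes that $X/G\in\textnormal{ANE}$.

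\smallskip

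\noindent\textbf{Main obstacle.} The delicate step, in my view, is the verification that the slice $S$ inherits the $K$-\textnormal{ANE} property from the ambient $G$-\textnormal{ANE} $X$. Everything else is essentially formal once the slice theorem, the compact-group theorem, and Hanner's theorem are at hand; but the slice-to-$K$-\textnormal{ANE} transfer requires a careful use of the twisted-product description $U\cong G\times_K S$ and of the associated functor between $K$-spaces and $G$-spaces. The metrizability of $X/G$ is a secondary but genuine point that also relies crucially on the hypothesis that all $G$-orbits are metrizable.
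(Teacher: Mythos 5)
Your overall strategy (localize via slices, invoke the compact--group Theorem~\ref{T:compactcase}, glue with Hanner) is the same skeleton as the paper's, but two of your steps fail for a general locally compact group $G$. First, you take $K=G_x$ and invoke Palais' slice theorem; but Theorem~\ref{T:Sl} is only valid for Lie groups. For arbitrary locally compact $G$ one must use the Approximate Slice Theorem~\ref{T:331}, which produces a $K$-slice whose slicing group $K$ is a compact \emph{large} subgroup containing $G_x$, not the stabilizer itself. This substitution is harmless for the rest of the argument, but it must be made. Second, and more seriously, your ``key intermediate assertion'' that the slice $S$ is a $K$-${\rm ANE}$ is exactly the point that cannot be carried out in this generality. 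Your sketch (``extendability across the ambient $G$-${\rm ANE}$ pulls back to $K$-extendability across $S$'') requires, after using the adjunction between $K$-maps into $S$ and $G$-maps into $G\times_K S$, a $K$-equivariant (neighborhood) retraction of $G\times_K S$ onto $S$ in order to land back in $S$. The existence of such a retraction for a compact large subgroup $K$ of an arbitrary locally compact $G$ is precisely the open Conjecture at the end of the paper; it is known only for Lie groups (Proposition~\ref{P:1}) and almost connected groups (Proposition~\ref{abels}).

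The paper sidesteps this obstruction: instead of descending to $S$, it applies Proposition~\ref{P:large} to conclude that the whole twisted product $G\times_K S$ is a $K$-${\rm ANE}$, applies Theorem~\ref{T:compactcase} to $G\times_K S$ (whose $K$-orbits are metrizable, being contained in $G$-orbits) to get that $(G\times_K S)/K$ is an ${\rm ANE}$, and then uses Proposition~\ref{retractdirect} to realize $(G\times_K S)/G\cong G(S)/G$ as a retract of $(G\times_K S)/K$. You should adopt this detour; with it, your local step goes through with $K$ a compact large subgroup. A further, smaller, problem: you assert that $X$ is metrizable by a $G$-invariant metric, which is not a hypothesis of the theorem ($X$ is only assumed to be a $G$-${\rm ANE}$ with metrizable orbits), so your metrizability argument for $X/G$ has no basis; fortunately it is also unnecessary, since the open-union form of Hanner's theorem (Theorem~\ref{T:Un}, or Hanner's Theorem 19.2) applies to a union of open ${\rm ANE}$ subsets without any metrizability of the ambient space.
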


\smallskip

      A proof  of this theorem was provided in  \cite[Theorem 6.4]{ant:jap}. In that proof         
      the following affirmation, that we state here in the form of a proposition, was used.

 \begin{proposition}\label{retractdirect}
 Let $G$ be a topological group and $K$ any closed subgroup of  $G$. If $S$ is a $K$-space, then the $G$-orbit space
 $({G \times_K S})/G$ is homeomorphic to a retract of  the $K$-orbit space $({G \times_K S})/K$.
 \end{proposition}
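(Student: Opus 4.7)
The plan is to exhibit a continuous section of the natural projection $r\colon (G\times_K S)/K \to (G\times_K S)/G$ (well-defined and continuous because the inclusion $K\subset G$ makes every $K$-orbit sit inside a $G$-orbit, so the $K$-equivalence refines the $G$-equivalence); its image is then the desired retract, with retraction $R:=i\circ r$.

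First I would identify $(G\times_K S)/G$ with $S/K$. Writing $[g,s]$ for the class of $(g,s)$ in the twisted product, with $K$ acting on $G\times S$ by $k(g,s)=(gk^{-1},ks)$ so that $[gk^{-1},ks]=[g,s]$, the assignment $[g,s]\mapsto [s]_K$ is $G$-invariant and continuous, hence descends to a continuous map $(G\times_K S)/G\to S/K$. It is two-sided inverse to the map $[s]_K\mapsto \pi_G([e,s])$ (which is itself well-defined since $[e,ks]=[k,s]=k\cdot[e,s]$ lies in the $G$-orbit of $[e,s]$), yielding a homeomorphism $\varphi\colon S/K\to (G\times_K S)/G$.

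Next I would construct the section. Define $f\colon S\to (G\times_K S)/K$ by $f(s)=\pi_K([e,s])$. The key $K$-invariance check uses the same identity $[e,ks]=[k,s]=k\cdot[e,s]$, which shows $f(ks)$ and $f(s)$ lie in a common $K$-orbit. Thus $f$ descends to a continuous $\bar f\colon S/K\to (G\times_K S)/K$, and $i:=\bar f\circ \varphi^{-1}$ is the candidate section of $r$. Verifying $r\circ i=\mathrm{id}$ amounts to the one-line computation $r(\pi_K([e,s]))=\pi_G([e,s])=\varphi([s]_K)$. It then follows formally that $i$ is a topological embedding with continuous left inverse $r$, and that $R=i\circ r$ is a continuous retraction of $(G\times_K S)/K$ onto $i((G\times_K S)/G)$, a subspace homeomorphic to $(G\times_K S)/G$.

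There is no genuine obstacle here: the argument is essentially bookkeeping with the twisted-product relation $[gk^{-1},ks]=[g,s]$. The only point requiring slight care is checking that $\varphi$ is a homeomorphism rather than merely a continuous bijection; this is handled by exhibiting its explicit continuous inverse as above, rather than appealing to any openness or compactness hypothesis, so the statement needs no restriction on $G$, $K$, or $S$ beyond the blanket requirement that $K$ be closed in $G$ (so that $G\times_K S$ is Hausdorff and the quotients are well-behaved).
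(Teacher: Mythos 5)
Your proof is correct and takes essentially the same route as the paper: you identify $(G\times_K S)/G$ with $S/K$ and then realize it inside $(G\times_K S)/K$ via $[g,s]_G\mapsto [e,s]_K$, so your retraction $R=i\circ r$, namely $[g,s]_K\mapsto [e,s]_K$, is exactly the paper's (which is organized as Propositions \ref{homeo} and \ref{retract} of the Appendix). The only cosmetic difference is that you verify the homeomorphism $S/K\cong (G\times_K S)/G$ by writing down an explicit continuous inverse rather than by openness of the induced map, and you do not need (and do not claim) that the embedding is closed, which the statement does not require.
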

 
   Here ${G \times_K S}$ denotes the twisted product of $G$ and $S$ with respect to the subgroup $K$ defined in Section \ref{def}.
      
The argument for the proof of this statement  given in the proof of \cite[Theorem 6.4]{ant:jap}, unfortunately,  works correctly only for an Abelian acting group $G$.  Namely,  in  that  proof 
 the formula  $(G \times_K S)/G\cong G/K\times S/K$ was used  which, however, is valid only for an Abelian group $G$ (see \cite[Proposition 2]{ant:99}).
 
  Although  we will need  Proposition \ref{retractdirect} here only in the case of a compact subgroup $K\subset G$, we provide
 a detailed proof  even for any closed subgroup $K\subset G$ in the Appendix.
  Thus,  the gap in the proof  of \cite[Theorem 6.4]{ant:jap} is easily filled.
 
 \medskip

   The second theorem is the following.
 
   \begin{theorem}[Proper actions of almost connected groups]\label{T:almost} Let $G$ be an almost connected locally compact group,  $H$  a closed normal subgroup of $G$, and $X$   a proper $G$-space that admits a $G$-invariant metric.   If 
 $X$ is a $G$-${\rm ANE}$, then the $H$-orbit space  $X/H$ is a $G/H$-${\rm ANE}$. 
\end{theorem}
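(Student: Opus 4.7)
\noindent\emph{Proof proposal.}
The plan is to reduce Theorem \ref{T:almost} to the compact case (Theorem \ref{T:compactcase}) by passing through a global $K$-slice, where $K$ is a maximal compact subgroup of the almost connected group $G$. The slice theorem of H.\ Abels for proper actions of almost connected locally compact groups furnishes a $K$-invariant closed subset $S\subset X$ such that the natural map $G\times_K S\to X$, $[g,s]\mapsto gs$, is a $G$-equivariant homeomorphism. Standard slice theory gives that $S$ is a $K$-${\rm ANE}$, and the restriction of the $G$-invariant metric on $X$ makes $S$ metrizable by a $K$-invariant metric, so in particular every orbit of any closed subgroup of $K$ in $S$ is metrizable.

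Set $L=H\cap K$. Because $H$ is normal in $G$, the subgroup $L$ is closed and normal in $K$. Applying Theorem \ref{T:compactcase} to the compact group $K$, its closed normal subgroup $L$, and the $K$-${\rm ANE}$ $S$, one concludes that $S/L$ is a $(K/L)$-${\rm ANE}$. The inclusion $K\hookrightarrow G$ descends to an injection $K/L\hookrightarrow G/H$, so $K/L$ sits as a compact subgroup of the almost connected group $G/H$. A direct calculation, using the normality of $H$ and the fact that $L$ acts trivially on $G/H$, then yields the $(G/H)$-equivariant homeomorphism
\[
X/H \;\cong\; (G\times_K S)/H \;\cong\; (G/H)\times_{K/L}(S/L).
\]

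To finish I would appeal to the converse of the slice reduction used at the start: if $\Lambda$ is a compact subgroup of a locally compact group $\Gamma$ and $Y$ is a $\Lambda$-${\rm ANE}$ with metrizable $\Lambda$-orbits, then $\Gamma\times_\Lambda Y$ is a $\Gamma$-${\rm ANE}$ in the class of proper $\Gamma$-spaces admitting an invariant metric. Applied with $\Gamma=G/H$, $\Lambda=K/L$, $Y=S/L$, this completes the argument. The main obstacle is precisely this last step: showing that twisted products over compact subgroups preserve the equivariant ${\rm ANE}$ property for the relevant class of proper actions. In the present framework this can be handled by constructing local equivariant extensions through a slice in the target proper $\Gamma$-space and then gluing them, invoking Proposition \ref{retractdirect} to convert data between the $\Gamma$-orbit space and the $\Lambda$-orbit space of the twisted product.
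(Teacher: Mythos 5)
Your reduction follows the paper's own strategy for its first half: Abels' global $K$-slice for a maximal compact $K$, the identification $X\cong G\times_K S$, and the fact that $S$ is a $K$-${\rm ANE}$. But two steps need attention. First, ``standard slice theory gives that $S$ is a $K$-${\rm ANE}$'' is not standard: you need to know that $X\cong G\times_K S$ is a $K$-${\rm ANE}$ (which uses Proposition \ref{P:large} together with Remark \ref{R:K}, since $G/K$ is Euclidean hence metrizable) and then that $S$ is a $K$-equivariant retract of $G\times_K S$. The latter is exactly Proposition \ref{abels}, which rests on Abels' parallelizability theorem and is special to almost connected groups --- for a general compact (even large) subgroup of a locally compact group this is precisely the open Conjecture at the end of the paper. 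So you must cite Proposition \ref{abels} here rather than wave at slice theory.

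The genuine gap is the final step, which you yourself flag as ``the main obstacle'': the claim that $\Gamma\times_\Lambda Y$ is a $\Gamma$-${\rm ANE}$ whenever $\Lambda$ is compact in $\Gamma$ and $Y$ is a $\Lambda$-${\rm ANE}$ with metrizable orbits. Your sketch of how to prove it (slices in the target plus Proposition \ref{retractdirect}) does not work as stated: Proposition \ref{retractdirect} relates the $\Gamma$-orbit space of a twisted product to its $\Lambda$-orbit space, which is the tool for descending ${\rm ANE}$ properties to orbit spaces, not for promoting a $\Lambda$-${\rm ANE}$ to a $\Gamma$-${\rm ANE}$ of the twisted product; moreover one needs $\Lambda$ to be a \emph{large} compact subgroup of $\Gamma$ for such a statement. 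The needed result is available in the paper as Proposition \ref{P:37}; with $H$ trivial it says exactly that $\Gamma\times_\Lambda Y$ is a $\Gamma$-${\rm ANE}$ when $\Lambda$ is compact large and $Y\in\Lambda$-${\rm ANE}$. To apply it in your setting you would also have to check that $K/L\cong KH/H$ is a compact \emph{large} subgroup of $G/H$ (true here, since it is maximal compact in the almost connected group $G/H$). Note, however, that your detour through $L=H\cap K$, Theorem \ref{T:compactcase}, and the identification $X/H\cong (G/H)\times_{K/L}(S/L)$ (which is correct, since $L$ acts trivially on $G/H$) is unnecessary: the paper applies Proposition \ref{P:37} directly to $G$, $H$, $K$ and $S$, using only that the $K\cap H$-orbits in $S$ are metrizable, and concludes $(G\times_K S)/H\cong G/H\times_K S\in G/H$-${\rm ANE}$ in one step.
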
   
  The proof of this theorem given in \cite[Theorem 3(1)]{ant:99} is correct only for compact subgroups $H\subset G$. For an arbitrary closed subgroup $H\subset G$ our argument in  \cite[Theorem 3(3)]{ant:99}  is based on \cite[Proposition 8(1)]{ant:99} the proof of which, unfortunately, contains a gap.
  The proof of  Theorem \ref{T:almost} given in Section  \ref{almost}  is based on the following well-known proposition.

          \medskip
  
    \begin{proposition}\label{abels} Let $G$ be an almost connected locally compact group, $K$ a compact  maximal  subgroup of $G$, and $S$  a $K$-space. Then $S$ is a $K$-equivariant retract of the twisted product $G\times_K S$.
    \end{proposition}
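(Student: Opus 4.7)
The plan is to reduce the proposition to the existence of a continuous $K$-equivariant retraction $f\colon G\to K$ of the inclusion $K\hookrightarrow G$, and then to invoke the structure theorem of H.~Abels for almost connected locally compact groups.

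The key structural input I would use is that, since $G$ is almost connected and $K$ is a maximal compact subgroup, there exists a closed subset $V\subset G$ with $e\in V$ such that the multiplication map $K\times V\to G$, $(k,v)\mapsto kv$, is a homeomorphism and $V$ is stable under conjugation by $K$, i.e.\ $kVk^{-1}=V$ for every $k\in K$. Defining $f(kv):=k$ then gives a continuous retraction $f\colon G\to K$ satisfying $f(k_1 g k_2)=k_1 f(g) k_2$ for all $k_1,k_2\in K$ and $g\in G$: left equivariance is automatic from the decomposition, while right equivariance follows from the conjugation invariance of $V$, since if $g=k_1 v$ then $gk_2=(k_1k_2)(k_2^{-1}vk_2)$ with $k_2^{-1}vk_2\in V$, and the $K\times V$ factorization is unique.

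Using $f$, I would define the desired retraction $r\colon G\times_K S\to S$ by $r([g,s]):=f(g)\cdot s$. Well-definedness on equivalence classes amounts to $f(gk)\cdot s=f(g)\cdot(ks)$ for $k\in K$, which is precisely the right equivariance of $f$. Continuity is inherited from $f$, and $K$-equivariance under the restriction of the $G$-action to $K$ reads $r(k\cdot[g,s])=f(kg)\cdot s=k\cdot r([g,s])$, which uses the left equivariance of $f$. The canonical map $\iota\colon S\hookrightarrow G\times_K S$, $\iota(s):=[e,s]$, is itself $K$-equivariant, since $\iota(ks)=[e,ks]=[k,s]=k\cdot\iota(s)$, and it satisfies $r\circ\iota(s)=f(e)\cdot s=s$. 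Thus $\iota$ realizes $S$ as a $K$-equivariant retract of $G\times_K S$.

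The only genuinely nontrivial ingredient is Abels' decomposition $G=K\cdot V$ with the stated equivariance properties; this is exactly where the hypotheses that $G$ is almost connected and that $K$ is a \emph{maximal} compact subgroup are indispensable, and is the main obstacle of the proof. Once this is in hand, everything else is formal manipulation inside the twisted product.
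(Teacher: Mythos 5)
Your argument is correct, and the formal part (well-definedness of $r([g,s])=f(g)\cdot s$ on $K$-orbits, continuity via the quotient map $G\times S\to G\times_K S$, $K$-equivariance from $f(kg)=kf(g)$, and $r\circ\iota=\mathrm{id}_S$) is all in order; but your route differs from the paper's. The paper invokes Abels' Theorem~2.1 directly: $G\times_K S$ is $K$-homeomorphic to $T\times S$ with the diagonal $K$-action, $T$ a finite-dimensional linear $K$-space, so the retraction is simply $(t,s)\mapsto(0,s)$. You instead invoke the structural fact underlying that theorem, namely the conjugation-invariant Malcev--Iwasawa decomposition $G=K\cdot V$ with $K\times V\to G$ a homeomorphism and $kVk^{-1}=V$, and then build the retraction by hand through the bi-equivariant projection $f\colon G\to K$. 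Two remarks. First, you are right that this decomposition is the crux, and it is indeed available in the appendix of Abels' 1974 paper for almost connected locally compact groups, so your proof is complete once you cite it precisely; but be aware that the conjugation-invariance of $V$ is a genuine refinement --- the naive Iwasawa complement (e.g.\ $AN$ in $G=KAN$ for $G$ semisimple) is \emph{not} invariant under conjugation by $K$, and one must use the Cartan-type complement instead, so a vague appeal to ``$G\cong K\times\mathbb{R}^n$ topologically'' would not suffice. Second, what each approach buys: the paper's citation of Theorem~2.1 already packages the twisted product in retractible form, whereas your decomposition of $G$ itself is slightly more primitive and in fact yields more --- it gives a $K$-homeomorphism $G\times_K S\cong V\times S$ (with $K$ acting on $V$ by conjugation) for an arbitrary $K$-space $S$, from which both Abels' Theorem~2.1 and the retraction follow; the price is that you must quote the sharper structure theorem rather than its packaged consequence.
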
        
    \begin{proof}
     This result  follows from a result of  Abels \cite[Theorem~2.1]{abe:74} according to which $G\times_K
S$ is $K$-homeomorphic to a product $T\times S$ endowed with the diagonal action of $K$, where $T$
is a finite-dimensional linear $K$-space. In this case the map $(t, s)\mapsto (0, s)$ is a
$K$-equivariant retraction of $T\times S$ onto $\{0\}\times S$ which, in turn, is $K$-homeomorphic
to $S$.
 \end{proof}

  \medskip
  
 The third theorem is the following.
 
   \begin{theorem}[Proper actions of  any Lie groups]\label{T:00} Let $G$ be a Lie  group,  $H$  a closed normal subgroup of $G$, and $X$   a proper $G$-space. If 
 $X$ is a $G$-${\rm ANE}$, then the $H$-orbit space  $X/H$ is a $G/H$-${\rm ANE}$. 

\end{theorem}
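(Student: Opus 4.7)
The plan is to reduce the global statement to a local one via Palais' slice theorem and then invoke the already established compact-group case, Theorem \ref{T:compactcase}. Since $G$ is a Lie group, so is $G/H$, and the induced $G/H$-action on $X/H$ is again proper. The $G/H$-ANE property for a metrizable proper $G/H$-space is a local property (an equivariant Hanner-type theorem in the proper category for Lie groups), so it suffices to cover $X/H$ by $G/H$-invariant open sets that are $G/H$-ANE.

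Fix $x \in X$. By the slice theorem for proper actions of Lie groups, the stabilizer $K = G_x$ is compact and there is a $K$-slice $S$ through $x$ whose tube $U = G\cdot S$ is open in $X$ and $G$-homeomorphic to the twisted product $G \times_K S$. Since $X$ is a $G$-ANE and $U$ is open and $G$-invariant, $U$ is a $G$-ANE; by the classical equivalence for compact stabilizer $K$, the slice $S$ is then a $K$-ANE. Set $\bar K = KH/H \cong K/(K\cap H)$, a compact subgroup of $G/H$. The key computation is the $G/H$-equivariant homeomorphism
\[
(G \times_K S)/H \;\cong\; (G/H) \times_{\bar K} \bigl(S/(K\cap H)\bigr),
\]
where $\bar K$ acts on $G/H$ by right translation and on $S/(K\cap H)$ via the canonical action of $K/(K\cap H)$. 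The map $[g,s] \mapsto [gH,\,(K\cap H)\cdot s]$ descends through the $K$-twist (using $gk^{-1}H \bar k = gH$ with $\bar k = kH$), is $H$-invariant (because $H \triangleleft G$ gives $hgH = gH$), and admits a well-defined inverse by the same normality.

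Since $K\cap H$ is a closed normal subgroup of the compact group $K$, Theorem \ref{T:compactcase} applied to the $K$-ANE $S$ yields that $S/(K\cap H)$ is a $\bar K$-ANE. By the converse direction of the twisted-product/ANE equivalence (valid for compact subgroups of a Lie group), the twisted product $(G/H)\times_{\bar K} (S/(K\cap H))$ is then a $G/H$-ANE. Hence $U/H$ is a $G/H$-ANE neighborhood of $Hx$ in $X/H$, and since $x$ was arbitrary such neighborhoods cover $X/H$, completing the proof by the local-to-global principle.

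The step I expect to be most delicate is verifying the displayed homeomorphism: one has to track both the $K$-twist in $G \times_K S$ and the residual $\bar K$-twist simultaneously, and the normality of $H$ in $G$ enters in several places (to ensure $H$-invariance, to rewrite $Hg = gH$, and to obtain a well-defined inverse). The two ingredients used as black boxes, namely the equivalence \emph{$S$ is $K$-ANE iff $G \times_K S$ is $G$-ANE} for compact $K \subset G$ and the local nature of the $G/H$-ANE property, are standard in the proper-action literature but should be cited explicitly when this sketch is turned into a full proof.
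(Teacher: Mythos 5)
Your proposal is correct and follows the same overall architecture as the paper's proof: Palais' Slice Theorem, reduction to tubes $G(S)\cong G\times_K S$ via the equivariant Hanner union theorem (Theorem \ref{T:Un}), and an identification of $(G\times_K S)/H$ with a twisted product over $G/H$. Where you genuinely diverge is the final step. The paper uses the single formula $(G\times_K S)/H\cong G/H\times_K S$, with $K$ itself acting on $G/H\times S$ by $k(gH,s)=(gk^{-1}H,ks)$, and then quotes Proposition \ref{P:37}; you instead pass first to $S/(K\cap H)$ via Theorem \ref{T:compactcase} and twist over $\bar K=K/(K\cap H)$. The two presentations agree (one checks, using normality of $H$, that the identifications $(g,s)\sim(hgk^{-1},ks)$ and $(g,s)\sim(gk^{-1}h,\,mks)$ with $m\in K\cap H$ generate the same equivalence relation), so your displayed homeomorphism is correct; your version has the merit of making the dependence on the compact-group Theorem \ref{T:compactcase} explicit, whereas the paper delegates that to Proposition \ref{P:37}. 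Two cautions when you flesh this out. First, your ``classical equivalence'' ($G\times_K S\in G$-ANE iff $S\in K$-ANE) is not a single citable fact: the implication you use is the conjunction of Proposition \ref{P:large} (restriction to the compact large subgroup $K$) and Proposition \ref{P:1} ($S$ is a neighborhood $K$-retract of $G\times_K S$), and the latter is only known for Lie and almost connected groups --- for general locally compact groups it is precisely the paper's closing conjecture --- so cite it as a Lie-group statement. Second, the metrizability hypotheses you silently suppress (all $K\cap H$-orbits in $S$ metrizable for Theorem \ref{T:compactcase}, and the test class in the local-to-global step) are indeed automatic here, since $K\cap H$ is a compact Lie group and Theorem \ref{T:Un} tests only against $Z\in G/H$-$\mathcal M$ without requiring $X/H$ itself to be metrizable, but a sentence acknowledging this is needed.
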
 

 In \cite[Theorem 1.1]{ant:proper}      a proof  of this theorem was given even for  any locally compact acting group $G$. Again, this proof used a formula  (see \cite[formula (3.3)]{ant:proper}), which is  only valid for Abelian groups. Below we give  a very brief proof of this theorem in the case of proper actions of arbitrary Lie groups, which is practically the most important case.
 This proof is based on the following  result we  proved 
 in \cite[Proposition 4.1]{ant:proper}.
  
  \begin{proposition}[\cite{ant:proper}]\label{P:1} Let $G$ be a Lie  group, $K$ a compact   subgroup of $G$, and $S$  a $K$-space. Then $S$ is a neighborhood $K$-equivariant retract of the twisted product $G\times_K S$.
\end{proposition}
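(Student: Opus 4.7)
The plan is to construct the neighborhood retraction from a local slice decomposition of $G$ around $K$. Since $G$ is a Lie group and $K$ is compact, averaging over $K$ produces an $\mathrm{Ad}(K)$-invariant vector space complement $\mathfrak{m}$ to the Lie algebra $\mathfrak{k}$ inside $\mathfrak{g}$. For a sufficiently small $\mathrm{Ad}(K)$-invariant open neighborhood $U$ of $0\in\mathfrak{m}$, the inverse-function theorem applied along the compact submanifold $\{0\}\times K$ shows that the map $\phi\colon U\times K\to G$, $\phi(X,k)=\exp(X)k$, is a diffeomorphism onto an open set $W\subset G$. Because $U$ is $\mathrm{Ad}(K)$-invariant, $W$ is invariant under both left and right translation by $K$.

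Next I would transfer this picture to the twisted product. The embedding $\iota\colon S\hookrightarrow G\times_K S$, $\iota(s)=[e,s]$, is $K$-equivariant for the natural $K$-action on $S$ and the restriction of the $G$-action on $G\times_K S$ to $K$. Since $W$ is right-$K$-invariant, the set $\widetilde W=\{[g,s]\in G\times_K S : g\in W\}$ is a well-defined open neighborhood of $\iota(S)$, and the chart $\phi$ trivializes it: the map $U\times S\to\widetilde W$, $(X,s)\mapsto [\exp(X),s]$, is a homeomorphism that conjugates the restricted $K$-action to the diagonal action $k\cdot(X,s)=(\mathrm{Ad}(k)X,\,ks)$. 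In these coordinates the obvious $K$-equivariant retraction $(X,s)\mapsto(0,s)$ onto $\{0\}\times S\cong\iota(S)$ gives the desired retraction $R\colon\widetilde W\to\iota(S)$, described intrinsically by $R\bigl([\exp(X)k,s]\bigr)=[e,ks]$.

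The verification then reduces to three routine checks: $R$ is well defined on equivalence classes (using $[\exp(X)k,s]=[\exp(X),ks]$), continuous (immediate from the chart $\phi$), and $K$-equivariant via the identity $k'\exp(X)k=\exp(\mathrm{Ad}(k')X)(k'k)$. The main subtlety to handle carefully is ensuring that $\phi$ is genuinely a diffeomorphism on all of $U\times K$ and that the resulting $W$ is bi-$K$-invariant; this is what forces the choice of $U$ to be $\mathrm{Ad}(K)$-invariant and small enough. Outside the range of this chart there is no canonical projection back to $K$, which is precisely why one obtains only a neighborhood retract, in contrast to Proposition \ref{abels}, where maximality of $K$ in an almost connected $G$ globalizes the construction.
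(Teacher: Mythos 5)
Your argument is correct. The paper does not reproduce a proof of Proposition \ref{P:1} (it only cites \cite[Proposition 4.1]{ant:proper}), but your construction --- an $\mathrm{Ad}(K)$-invariant complement $\mathfrak m$ to $\mathfrak k$ obtained by averaging over the compact group $K$, the resulting bi-$K$-invariant tube $W=\exp(U)K$ on which $\phi(X,k)=\exp(X)k$ is a diffeomorphism, and the induced $K$-homeomorphism $U\times S\cong\widetilde W$ intertwining the diagonal action $k\cdot(X,s)=(\mathrm{Ad}(k)X,ks)$ --- is exactly the local analogue of the Abels linearization $G\times_KS\cong T\times S$ invoked in Proposition \ref{abels}, and is in substance the argument of the cited source. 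The one step you flag but do not fully write out, that $\phi$ is injective (not merely a local diffeomorphism) on $U\times K$ for $U$ small, is the standard uniform-injectivity argument near the compact set $\{0\}\times K$ on which $\phi$ restricts to an injection with invertible differential, so the proof is complete as outlined.
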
    
  
    \medskip

    Regarding the \lq\lq $G$-AE version\rq\rq \  of the above  results, we have the following theorem proven in \cite[Theorem 7.1]{ant:fund09}.

    \begin{theorem}\label{T:}
Let $G$ be a locally compact group and $X$ any   $G$-${\rm
{\rm AE}}$. Assume that $H$ is an almost connected normal subgroup of $G$
such that all $H$-orbits in $X$ are metrizable. Then the $H$-orbit space $X/H$ is a $G/H$-${\rm
AE}$.
\end{theorem}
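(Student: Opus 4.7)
The plan is to proceed in two stages: first to establish that $X/H$ is a $G/H$-${\rm ANE}$, and then to upgrade from ${\rm ANE}$ to ${\rm AE}$ using the global extension property inherited from $X$.

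For the ${\rm ANE}$ step, I observe that $X$, being a $G$-${\rm AE}$, is in particular a $G$-${\rm ANE}$. The difficulty is that neither Theorem~\ref{T:almost} nor Theorem~\ref{T:00} applies directly, since $G$ itself is only assumed locally compact, with the almost-connected hypothesis placed on $H$ rather than on $G$. The argument is therefore local. By Palais' slice theorem, every $x \in X$ admits a tubular $G$-neighborhood of the form $G \times_{G_x} S$ with compact isotropy $G_x$ and slice $S$. Normality of $H$ in $G$ means that the $H$-action on such a tube is controlled by $H \cap G_x$ together with the almost connected structure of $H$; applying Proposition~\ref{abels} to $H$ with its maximal compact subgroup $K \subset H$ linearizes the $H$-action, reducing it locally to a $K$-action on a slice. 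This brings the local extension problem into the compact-group setting covered by the ${\rm ANE}$ conclusion of Theorem~\ref{T:compactcase}. An equivariant partition-of-unity patching on the paracompact $G/H$-orbit space $X/H$ then produces the required global $G/H$-${\rm ANE}$ structure.

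For the upgrade to ${\rm AE}$, given a proper $G/H$-space $Y$ in our class, a closed $G/H$-invariant subset $A \subset Y$, and a $G/H$-map $f \colon A \to X/H$, I would form the pullback $\widetilde{A} = A \times_{X/H} X$ along the quotient $\pi \colon X \to X/H$. Then $\widetilde{A}$ sits naturally inside a proper $G$-space $\widetilde{Y}$ (the $H$-principal object constructed from $Y$ using the pullback), and the second projection $\widetilde{A} \to X$ is a $G$-map. The $G$-${\rm AE}$ property of $X$ supplies a $G$-equivariant extension of $\widetilde{A} \to X$ to all of $\widetilde{Y} \to X$; composing with $\pi$ and passing to $H$-orbits on the domain yields the desired $G/H$-equivariant extension $Y \to X/H$ of $f$.

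The main obstacle is the ${\rm ANE}$ step: since $G$ is not almost connected and $H$ is not required to be compact, reducing the local extension problem to Theorem~\ref{T:compactcase} requires careful bookkeeping of the $H$-action on Palais tubes via Proposition~\ref{abels}. The ${\rm AE}$ upgrade is then a comparatively clean pullback-and-descend argument, conditional on verifying that the auxiliary proper $G$-space $\widetilde{Y}$ genuinely belongs to our class of proper actions metrizable by a $G$-invariant metric, which follows from metrizability of the $H$-orbits by a standard averaging of metrics.
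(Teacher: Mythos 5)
Your proposal has genuine gaps at both stages; note also that the paper does not prove Theorem \ref{T:} here at all but quotes it from \cite{ant:fund09}, so your argument must stand on its own. In the ANE stage you invoke Palais' slice theorem to cover $X$ by tubes $G\times_{G_x}S$, but that theorem (Theorem \ref{T:Sl}) requires $G$ to be a Lie group and $X$ to be a proper $G$-space, and neither hypothesis is available: $G$ is merely locally compact, and a $G$-${\rm AE}$ need not carry a proper action (this is exactly why the paper proves separate ``non-proper'' versions, Theorems \ref{T:NP1} and \ref{T:NP2}). Even granting properness one would only have the Approximate Slice Theorem \ref{T:331}, with slicing groups $K\supset G_x$ rather than $G_x$ itself. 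The subsequent reduction --- ``normality of $H$ controls the $H$-action via $H\cap G_x$ and Proposition \ref{abels}'' --- is not an argument: Proposition \ref{abels} needs a global maximal-compact slice for the $H$-action, whose existence (Abels' theorem) requires the $H$-space to be proper with metrizable orbit space. And ``equivariant partition-of-unity patching'' is not how ANE-ness is globalized; the correct tool is the open union theorem (Theorem \ref{T:Un}), which presupposes that each open invariant piece has already been shown to be an ANE, which your local analysis does not establish.

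The more serious gap is in the AE upgrade. Given $f\colon A\to X/H$ with $A$ closed and invariant in $Y$, the pullback $\widetilde A=A\times_{X/H}X$ is defined only over $A$; there is no ``$H$-principal object $\widetilde Y$ constructed from $Y$ using the pullback,'' because producing a $G$-space $\widetilde Y\in G$-$\mathcal M$ with $\widetilde Y/H=Y$ containing $\widetilde A$ as a closed invariant subset is precisely the content of a global lifting theorem, and it is exactly there that almost connectedness of $H$ must be used. Your upgrade makes no use of that hypothesis, so it would equally ``prove'' the statement for $G=\mathbb R$, $H=\mathbb Z$, $X=\mathbb R$, where $X$ is a $G$-${\rm AE}$ by \cite[Theorem 4.4]{abe:78} but $X/H=S^1$ is not an ${\rm AE}$ --- the paper's own counterexample. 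Concretely, for $Y=D^2$, $A=\partial D^2$ and $f$ the identity onto $S^1=X/H$, the required $\widetilde Y$ would restrict over $A$ to the nontrivial $\mathbb Z$-covering $\mathbb R\to S^1$, which does not extend over the disk. A correct proof must construct $\widetilde Y$ globally (compare the neighborhood version, Theorem \ref{T:lifting1}, whose proof embeds $A$ into a $G$-${\rm AE}$ $L$ and would need $L/H$ to be a $G/H$-${\rm AE}$ in order to take $U=B$), and that is where the structure of the almost connected group $H$ --- in particular the contractibility of $H/K$ for $K$ maximal compact --- has to enter.
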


\smallskip

 We note that almost connectedness  of $H$ is  essential in this theorem. Indeed, let    $G=\Bbb R$,
 the reals,  $X=\Bbb R$  and $H=\Bbb Z$,
 the integers.  Then the translation action is a proper action of $G$ on $X$, and  by
  \cite[Theorem 4.4]{abe:78}, \ $X$ is   a $G$-${\rm {\rm AE}}$.
  However $X/H$,  being a circle, is not  an ${\rm {\rm AE}}$.
  
        \medskip
        
        In Section \ref{lifting}   we strengthen  
 Theorems~\ref{T:0}, \ref{T:almost} and \ref{T:00},  discarding  in their statements the hypothesis about the properness of the $G$-space $X$. This is achieved by using the lifting properties of equivariant embeddings.

        \medskip
        
Before passing to the details of the proofs, it is convenient to recall some auxiliary notions and results.
 
\medskip

\section {Some basic definitions and auxiliary results}\label{def}

Throughout the paper the letter $G$ will denote a locally compact Hausdorff group unless otherwise is stated; by $e$ \ we  denote the unity of $G$. 

All topological spaces  are assumed to be Tychonoff (= completely regular and Hausdorff). 
The basic ideas and facts of the theory of $G$-spaces or topological
transformation groups can be found in Bredon \cite{bre:72} and in  Palais \cite{pal:60}.  
Our basic references on  proper group actions are   Palais~\cite{pal:61} and Abels   \cite{abe:78}.        
For the  equivariant theory of retracts the reader can see, for instance, \cite{ant:87},  \cite{ant:88} \cite{ant:99}, \cite{ant:jap} and \cite{ant:proper}. 

For the  convenience of the reader we recall, however,  some more  special definitions and facts.

\smallskip

Here we deal with $G$-spaces. If $X$ and $Y$ are two $G$-spaces then a continuous map $f:X\to Y$ is called a $G$-map, if $f(gx)=gf(x)$ for all $x\in X$ and $g\in G$. If a $G$-map is a homemorphism then it is called a $G$-homeomorphism.

If $X$ is a $G$-space and $H$ a subgroup of $G$ then, for a subset $S\subset X$, \ $H(S)$ denotes the $H$-saturation of $S$, i.e., $H(S)$= $\{hs | \ h\in H,\ s\in S\}$. In particular, $H(x)$  denotes the $H$-orbit $\{hx\in X | \ h\in H \}$ of $x$.  The quotient space  of all $H$-orbits is called the $H$-orbit space and denoted by $X/H$. 

If $H(S)$=$S$, then $S$ is said to be an $H$-invariant set. A  $G$-invariant set will simply be called an invariant set.  

For a closed subgroup $H \subset G$, by $G/H$ we will denote the $G$-space 
of cosets $\{gH | \ g\in G\}$ under the action induced by left translations.

If $X$ is a $G$-space and $H$  a closed normal subgroup of $G$, then the $H$-orbit space $X/H$  will always be regarded as a  $G/H$-space endowed with the following action of the group $G/H$:
$(gH)*H(x)=H(gx), \ \ \text{where} \ \ gH\in G/H, \ H(x)\in X/H$.

For any $x\in X$, the subgroup   $G_x =\{g\in G   \mid  gx=x\}$ is called  the stabilizer (or stationary subgroup) at $x$.

\smallskip

 Let  $X$ be  a $G$-space. Two subsets  $U$ and $V$ in  $X$  are called  thin relative to each other   \cite[Definition 1.1.1]{pal:61},  if the set 
$\langle U,V\rangle=\{g\in G | \ gU\cap V\ne \emptyset\}$
    has  a compact closure in $G$. 
   A subset $U$ of a $G$-space $X$ is called {\it small}, \ if  every point in $X$ has a neighborhood thin relative to $U$. A $G$-space $X$ 
is called  {\it  proper} (in the sense of R. Palais),  if   every point in  $X$ has a small neighborhood. We refer to the seminal paper of R. Palais \cite{pal:61} for further information about proper $G$-spaces.

\smallskip

In the present  paper we are especially interested in the class $G$-$\mathcal M$ of  all  metrizable proper $G$-spaces  that admit a compatible $G$-invariant metric. 
It is well-known that, for $G$ a compact group, the class $G$-$\mathcal M$
coincides with the class of {\it all} metrizable $G$-spaces (see \cite[Proposition 1.1.12]{pal:60}).
A  fundamental result of  R.~Palais \cite[Theorem~4.3.4]{pal:61}
states that if $G$ is a Lie group, then  $G$-$\mathcal M$ includes all {\it separable}, metrizable  proper $G$-spaces.

 Let us recall the definition of a twisted product $G/H\times_K S$, where $H$ is a closed normal subgroup of $G$,  $K$  any closed subgroup of $G$,  and   $S$  a $K$-space. 
 
$G/H\times_KS$ is the orbit space of the $K$-space $G/H\times S$, where $K$ acts on the Cartesian product $G/H\times S$ by $k(gH, s)=(gk^{-1}H, ks)$. Furthermore, there is a natural action of $G$ on $G/H\times_K S$ given by $g^\prime[gH, s]=[g^\prime gH, s]$, where $g'\in G$ and $[gH, s]$ denotes the  $K$-orbit of the point $(gH, s)$ in $G/H\times S$. The twisted products of the form $G\times_KS$ (i.e., when $H$ is  the trivial subgroup of $G$) are  of a particular interest in the theory of transformation groups (see \cite[Ch.~II, \S~2]{bre:72}). 

 \smallskip

A $G$-space  $Y$ is called an equivariant absolute neighborhood extensor  for the class  $G$-$\mathcal M$ 
(notation: $Y\in G$-{\rm ANE})  if,   for any  $X\in G$-$\mathcal M$  and any closed invariant subset $A\subset X$, every    $G$-map $f:A\to Y$ admits a $G$-map $\psi\colon U\to Y$ defined on 
  an invariant neighborhood $U$ of $A$ in $X$ such that $\psi|_A= f$. If, in addition, one  can always take $U=X$, then we say that $Y$ is an equivariant absolute extensor  for  $G$-$\mathcal M$ 
  (notation: $Y\in G$-AE). The map $\psi$ is called a $G$-extension of $f$.

\medskip

The following proposition was proved in \cite[Proposition 3.3]{ant:proper} and will be used in the proofs of Theorems \ref{T:almost} and \ref{T:00}.

\begin{proposition}[\cite{ant:proper}]\label{P:37} Let $H$ be a closed normal subgroup of $G$, $K$   a compact large subgroup of $G$,  and $S$ a $K$-space. If $S$ is a $K$-$ANE$, and all  $K\cap H$-orbits in $S$ are metrizable, then the twisted product $G/H\times_K S$ is a $G/H$-$ANE$.   
\end{proposition}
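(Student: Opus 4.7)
The approach is to reduce the statement to the case $H=\{e\}$---a standard fact about twisted products over large compact subgroups, presumably established earlier in \cite{ant:proper}---by absorbing the subgroup $L:=K\cap H$ of $K$ into the fiber. First I would verify that $L$ is a compact normal subgroup of $K$: it is closed in the compact group $K$, and for $k\in K$, $l\in L$ the normality of $H$ in $G$ gives $klk^{-1}\in K\cap H=L$. The composition $K\hookrightarrow G\to G/H$ has kernel $L$ and factors through a compact embedding $K':=K/L\cong KH/H\hookrightarrow G/H$. A routine check against the definition of a large subgroup in \cite{ant:proper} shows that largeness of $K$ in $G$ descends to largeness of $K'$ in $G/H$.

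The heart of the argument is the following identification of twisted products. Since $L\subset H$, the inherited $L$-action on $G/H$, given by $l\cdot gH=gl^{-1}H=gH$, is trivial, whereas $L$ acts on $S$ by restriction of the $K$-action. Hence $(G/H\times S)/L=G/H\times(S/L)$. Since $L\triangleleft K$, quotienting further by $K'=K/L$ yields the $G/H$-equivariant homeomorphism
$$G/H\times_K S \;=\; \bigl((G/H\times S)/L\bigr)/K' \;=\; G/H\times_{K'}(S/L).$$

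With this in hand, I would apply Theorem~\ref{T:compactcase} to the compact group $K$, its closed normal subgroup $L$, and the $K$-ANE $S$ (whose $L$-orbits are metrizable by hypothesis) to conclude that $S/L$ is a $K'$-ANE. Finally I would invoke the case $H=\{e\}$ of the present proposition---that for a locally compact group $\Gamma$ with a large compact subgroup $\Lambda$ and a $\Lambda$-ANE $T$, the twisted product $\Gamma\times_\Lambda T$ is a $\Gamma$-ANE---with $\Gamma=G/H$, $\Lambda=K'$, $T=S/L$, deducing that $G/H\times_{K'}(S/L)$ is a $G/H$-ANE. The identification then delivers the result.

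The main obstacle I anticipate is the identification step: checking that the displayed equalities are genuine $G/H$-equivariant homeomorphisms, rather than mere set-theoretic bijections, requires patient bookkeeping of the interlocking $K$-, $L$-, $K'$- and $G/H$-actions and of the representatives chosen at each stage of the iterated quotient. Confirming that largeness descends from $K\subset G$ to $K'\subset G/H$ is a secondary, more easily dispatched technicality.
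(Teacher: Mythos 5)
The paper does not actually prove Proposition~\ref{P:37} here --- it is imported by citation from \cite[Proposition 3.3]{ant:proper} --- so there is no in-text argument to compare against; judged on its own, your reduction is sound and is essentially the natural route to this statement. The verifications you postpone do go through: $L=K\cap H$ is indeed compact and normal in $K$; the identification $G/H\times_K S\cong G/H\times_{K'}(S/L)$ with $K'=K/L\cong KH/H$ is a genuine $G/H$-homeomorphism, because $L$ acts trivially on the $G/H$-factor, the map $\mathrm{id}\times\pi_L$ is open (hence the set-theoretic bijection $(G/H\times S)/L\to G/H\times(S/L)$ is a homeomorphism), and the residual $K'$-action matches the right-translation action of $KH/H\subset G/H$ by normality of $H$; and Theorem~\ref{T:compactcase} applies verbatim to $(K,L,S)$ to give $S/L\in K'$-ANE.

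Two caveats keep this from being self-contained. First, the base case you invoke --- $\Gamma\times_\Lambda T\in\Gamma$-ANE for $\Lambda$ a compact large subgroup and $T\in\Lambda$-ANE --- is formally the $H=\{e\}$ instance of the very proposition being proved, so you must source it from an independent result (it is available, e.g.\ via the characterization of large subgroups through extension properties of $G/K$ in \cite{ant:jap} and \cite{ant:dikr}); this base case is where all the real work and the largeness hypothesis live, so the reduction, while correct, displaces rather than discharges the main difficulty. Second, the descent of largeness from $K\subset G$ to $KH/H\subset G/H$ is not purely formal: local connectedness of $G/KH$ follows since it is an open continuous image of $G/K$, but finite-dimensionality of $G/KH$ requires the dimension relation $\dim G/K=\dim G/KH+\dim KH/K$ for coset spaces of locally compact groups (and one should note that $KH$ is closed, being the product of a compact and a closed set). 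With those two references supplied, the argument is complete.
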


\smallskip

Let us recall  the well known definition of a slice \cite[p.~305]{pal:61}:

\begin{definition}\label{D:21} Let $X$ be a $G$-space and   $H$   a closed  subgroup of $G$. An $H$-invariant  subset $S\subset X$ is called an  $H$-slice in $X$, if $G(S)$ is open in $X$ and there exists  a $G$-map $f:G(S)\to G/H$ such that $S$=$f^{-1}(eH)$.  The saturation $G(S)$ is  called   a {\it tubular} set and $H$ is called a slicing group. 

  If  $G(S)=X$, then we say that $S$ is {\it a global} $H$-slice for $X$.   
\end{definition}
\medskip

The following  result of R. Palais \cite[Proposition 2.3.1]{pal:61} plays  a central role in the theory of topological transformation groups.

\begin{theorem}[Slice Theorem]\label{T:Sl} Let $G$ be a Lie group, $X$ be a proper $G$-space and $x\in X$. Then there exists a $G_x$-slice $S\subset X$ such that $x\in S$.
\end{theorem}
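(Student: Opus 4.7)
The plan rests on two structural facts about proper Lie-group actions. First, since $X$ is proper, the stabilizer $G_x$ is compact: $\langle\{x\},\{x\}\rangle=G_x$ has compact closure in $G$ by properness, and $G_x$ is closed. Second, because $G$ is a Lie group and $G_x$ is a closed subgroup, $\pi\colon G\to G/G_x$ is a principal $G_x$-bundle, so there is an open neighborhood $U$ of the coset $eG_x$ and a continuous section $\sigma\colon U\to G$ with $\sigma(eG_x)=e$.

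Next, I would choose a $G_x$-invariant open neighborhood $V$ of $x$ which is thin relative to itself, i.e.\ $\overline{\langle V,V\rangle}$ is compact in $G$. Properness supplies some neighborhood thin relative to itself; intersecting with its $G_x$-translates (finitely many essentially suffice, since $G_x$ is compact and acts continuously) yields a $G_x$-invariant neighborhood with the same property. Shrinking further, I would arrange $\pi\bigl(\overline{\langle V,V\rangle}\bigr)\subset U$, so that $\sigma$ is defined on every coset that can arise.

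To construct the slice itself, I would invoke the Mostow--Palais embedding theorem, which in the proper Lie setting provides a $G$-equivariant embedding $j\colon X\hookrightarrow E$ into a Banach $G$-space. Near $x$, the orbit $G(x)$ is locally a homogeneous submanifold, and its tangent direction at $j(x)$ is $G_x$-equivariantly complemented by a closed linear subspace $N\subset E$. The candidate slice is $S=j^{-1}\bigl(j(x)+B\bigr)\cap V$, where $B$ is a sufficiently small $G_x$-invariant open ball in $N$; by construction $S$ is $G_x$-invariant and contains $x$.

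The main obstacle is verifying that the map $f\colon G(S)\to G/G_x$ defined by $f(gs)=gG_x$ is well defined: if $g_1s_1=g_2s_2$ with $s_i\in S$, then $g_2^{-1}g_1$ lies in the compact set $\overline{\langle V,V\rangle}$, and since $s_2=g_2^{-1}g_1s_1$ lies in the transverse disk, shrinking $B$ forces $g_2^{-1}g_1$ to act arbitrarily close to the identity on a transverse neighborhood; local triviality of $\pi$ via $\sigma$ then pushes it into $G_x$. Once $f$ is well defined, openness of $G(S)$ follows because $G(S)$ is saturated and $V$ is open, continuity of $f$ follows from continuity of $\sigma$, and $S=f^{-1}(eG_x)$ is automatic. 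Coordinating the smallness of $B$ with the choice of $V$ and the domain of $\sigma$, so that the well-definedness argument closes without circularity, is the technical heart of the argument.
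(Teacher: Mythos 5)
The paper does not actually prove this statement: it is quoted directly from Palais \cite[Proposition~2.3.1]{pal:61}, so your proposal has to be measured against that classical argument. Your opening moves are correct and coincide with the standard ones: $G_x=\langle\{x\},\{x\}\rangle$ is compact by properness, $G\to G/G_x$ admits a local cross-section because $G$ is a Lie group and $G_x$ is closed, and a $G_x$-invariant self-thin neighborhood of $x$ exists by compactness of $G_x$ and the tube lemma. The construction of the slice itself, however, contains genuine gaps.

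First, the appeal to a Mostow--Palais type equivariant embedding $j\colon X\hookrightarrow E$ into a Banach $G$-space is not available at this level of generality: the theorem is asserted for an arbitrary proper $G$-space, which in this paper means merely Tychonoff, with no metrizability, separability or finite-dimensionality hypotheses. The known equivariant embedding theorems for proper actions (Mostow's, for compact Lie groups on finite-dimensional separable metric spaces; or \cite{ant:elena}, for metrizable proper $G$-spaces) either do not apply here or are themselves established using slice theorems, so the route is at best circular. Second, even granting such an embedding, the differential-geometric vocabulary is unjustified: a continuous action of a Lie group by linear or affine maps on a Banach space need not be differentiable in the norm topology, so the orbit $j(G(x))$ has no well-defined tangent space at $j(x)$, hence no canonical $G_x$-invariant complement $N$ and no transversality theory to invoke. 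This undermines exactly the step you yourself flag as the technical heart: the claim that $g_2^{-1}g_1$, acting ``close to the identity on a transverse disk,'' must already lie in $G_x$ is precisely the local injectivity of the orbit map transverse to the slice --- that is, the content of the slice theorem --- and your sketch does not close it. The classical proof proceeds differently: rather than embedding $X$, it uses complete regularity together with Haar averaging over the compact group $G_x$ to manufacture an equivariant map (not an embedding) from a neighborhood of the orbit into a finite-dimensional $G_x$-representation, combines it with the local cross-section of $G\to G/G_x$ to obtain the required $G$-map $f\colon G(S)\to G/G_x$, and takes $S=f^{-1}(eG_x)$. Reorganizing your argument around such a Tietze--Gleason extension of the canonical map on the orbit, instead of a global embedding and a transversality argument, is what is needed to make it work.
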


In our proofs we will also need  the following approximate version of the Slice Theorem  proved in  \cite[Theorem 3.6]{ant:jap} (see also \cite[Theorem 6.1]{ant:dikr})  which is valid for any locally compact group.

\begin{theorem}[Approximate Slice Theorem] \label{T:331} Let $G$ be any group, $X$  a proper $G$-space and  $x\in X$. Then for any  neighborhood $O$ of $x$ in $X$,   there exist  a   compact  large subgroup $K$ of $G$ with $G_x\subset K$, and a $K$-slice  $S$ such that  $x\in S\subset O$. 
\end{theorem}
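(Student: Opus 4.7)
The plan is to reduce to Palais's classical Slice Theorem (Theorem~\ref{T:Sl}) by approximating $G$ by a Lie group via the Gleason--Yamabe structure theorem. Since the action is proper, the stabilizer $G_x$ is compact. The central task is to construct a compact \emph{large} subgroup $K$ of $G$ with $G_x \subset K$, where ``large'' means that some open subgroup $G^* \subset G$ contains $K$ and $G^*/K$ is a Lie coset space (in particular a manifold).

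To produce $K$, I would first choose an almost connected open subgroup $G^* \subset G$ containing $G_x$ (constructed, for instance, by enlarging the identity component $G^0$ by a small $G_x$-invariant compact open neighborhood and adjoining $G_x$; compactness of $G_x$ ensures this can be arranged). Then, by the Gleason--Yamabe theorem applied inside $G^*$, for every neighborhood $V$ of the identity there is a compact normal subgroup $N \triangleleft G^*$ with $N \subset V$ and $G^*/N$ a Lie group. Set $K := N \cdot G_x$, which is a compact subgroup of $G^*$ because $N$ is normal in $G^*$, and whose image $K/N$ in the Lie group $G^*/N$ is a compact subgroup. Consequently $G^*/K \cong (G^*/N)/(K/N)$ is a manifold, so $K$ is large.

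Next, I would descend to the Lie group quotient. Since $N$ is compact and normal in $G^*$, the $N$-orbit space $X/N$ is a proper $G^*/N$-space, and the image $\bar x$ of $x$ has stabilizer exactly $K/N$ (the preimage of $K/N$ under $G^* \to G^*/N$ is $N G_x = K$). Palais's Slice Theorem then yields a $(K/N)$-slice $\widetilde S \subset X/N$ through $\bar x$. Its preimage $S \subset X$ under the quotient map $X \to X/N$ is $K$-invariant, its $G^*$-saturation $G^*(S)$ is open, and the composition of the slicing map $G^*(S)/N \to (G^*/N)/(K/N)$ with the identification $(G^*/N)/(K/N) \cong G^*/K$ lifts to a $G^*$-map $G^*(S) \to G^*/K$. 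Since $G^*$ is open in $G$, the set $G(S)$ is open in $X$ and the slicing map extends $G$-equivariantly to $G(S) \to G/K$, so $S$ is a $K$-slice in the $G$-space $X$.

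Finally, to arrange $S \subset O$ one shrinks both $V$ (so that $K$ is close to $G_x$ and $K(x) \subset O$) and $\widetilde S$ (so that its preimage in $X$ lies in a small neighborhood of $K(x)$). The principal obstacle is the first step: ensuring that the compact subgroup $K = NG_x$ can be made arbitrarily close to $G_x$. The delicate point is that $G_x$ and $N$ need not commute, so one must exploit the normality of $N$ in $G^*$, together with a uniform (compactness-based) argument using the continuity of conjugation by elements of the compact set $G_x$, to conclude that $NG_x$ shrinks to $G_x$ as $N$ shrinks to $\{e\}$.
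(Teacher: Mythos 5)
The paper does not prove this theorem; it imports it from \cite{ant:jap} (Theorem 3.6) and \cite{ant:dikr} (Theorem 6.1), and your route --- pass to an open almost connected subgroup $G^*\supset G_x$, use Yamabe's theorem to find a small compact normal $N\lhd G^*$ with $G^*/N$ Lie, set $K=NG_x$, apply Palais's exact Slice Theorem to the proper $G^*/N$-space $X/N$, and pull the slice back --- is essentially the known proof from those references. Your construction of $K$ is sound ($K$ is large in the paper's sense because $G/K$ is a disjoint union of copies of the manifold $G^*/K$), and the descent to $X/N$ with stabilizer $K/N$ at $\bar x$ is correct. Note, though, that the ``principal obstacle'' you single out is not one: you do not need any conjugation or uniformity argument to control $K=NG_x$, since set-theoretically $NG_x\subset VG_x$ the moment $N\subset V$, and for the purpose of getting $S\subset O$ all you actually need is $K(x)=N(x)\subset O$, which is immediate for small $V$.

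The step you treat as automatic is the one that genuinely needs an argument: promoting the $K$-slice from the $G^*$-space $X$ to the $G$-space $X$. Openness of $G(S)$ is indeed free, but the $G$-equivariant extension $G(S)\to G/K$ of the slicing map is \emph{not}: if $gS\cap S\neq\emptyset$ for some $g\notin K$ then no such $G$-map can exist (any slicing map forces $\langle S,S\rangle\subset K$), and a slice for an open subgroup need not satisfy this for elements outside $G^*$. One must first shrink $S$ so that $\langle S,S\rangle\subset G^*$ (hence $\subset K$), which is where properness of the full $G$-action enters: since $\langle K(x),K(x)\rangle=KG_xK=K$, a standard compactness/net argument shows that for every open $P\supset K$ there is a neighborhood $U$ of $K(x)$ with $\langle U,U\rangle\subset P$; taking $P=G^*$ and replacing $S$ by $S\cap U$ fixes the extension. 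With that repair (and the routine checks that $X/N$ is Tychonoff and that properness descends to $X/N$), your argument goes through and agrees with the proof in the cited sources.
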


Recall that here a subgroup $K\subset G$ is called  {\it large}, if the quotient space $G/K$ is locally connected and finite-dimensional (see \cite{ant:dikr}).

In the context of equivariant extension properties the  notion of a  large subgroup was first singled out in \cite{ant:94} (for compact groups)  and in \cite{ant:99} (for locally compact groups). Although  some geometric characterizations of this notion were available much earlier (see \cite[Section 3]{ant:dikr} and  the literature cited there), new characterizations through equivariant extension properties of the coset space $G/K$ were given in \cite[Proposition 6]{ant:99}, \cite[Proposition 3.2]{ant:jap} and \cite[Theorem 5.3]{ant:dikr}.

\medskip

The following result will be applied in the proofs of all three theorems below.


\begin{proposition}[{\cite[Proposition 3.4]{ant:jap}}]\label{P:large} Let $K$  be a compact  large  subgroup of $G$, and $X$  a  $G$-${\rm ANE}$ (respectively, a $G$-${\rm AE}$). Then  $X$ is  a $K$-${\rm ANE}$ (respectively, a $K$-${\rm AE}$).
\end{proposition}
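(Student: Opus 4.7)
The plan is to translate the $K$-equivariant extension problem for $X$ into a $G$-equivariant one by means of the twisted product construction $G\times_K(\cdot)$. Concretely, given a $K$-space $Y\in K$-$\mathcal M$, a closed $K$-invariant subset $B\subset Y$, and a $K$-map $f\colon B\to X$, I would form the $G$-space $G\times_K Y$, whose closed $G$-invariant subset $G\times_K B$ carries the natural $G$-map $\tilde f\colon G\times_K B\to X$ defined by $\tilde f([g,b])=g\cdot f(b)$; this formula is unambiguous exactly because $f$ is $K$-equivariant. The canonical embedding $\iota\colon Y\to G\times_K Y$, $y\mapsto [e,y]$, is a closed $K$-equivariant embedding whose image is a global $K$-slice of $G\times_K Y$ (with slicing map $[g,y]\mapsto gK$), so $B$ may be identified with $\iota(B)\subset G\times_K B$.

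The critical step is verifying $G\times_K Y\in G$-$\mathcal M$. Properness of the $G$-action on $G\times_K Y$ is automatic from the compactness of $K$. The existence of a compatible $G$-invariant metric is precisely where the hypothesis that $K$ be large enters: largeness guarantees that $G/K$ is locally compact, locally connected and finite-dimensional, and hence metrizable; combining a metric on $G/K$ with a compatible $K$-invariant metric on $Y$ (which exists since $Y\in K$-$\mathcal M$) yields a compatible $G$-invariant metric on $G\times_K Y$ via a standard averaging over the compact group $K$.

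Once $G\times_K Y\in G$-$\mathcal M$ is established, the $G$-\textrm{ANE} hypothesis on $X$ produces a $G$-map $F\colon U\to X$ extending $\tilde f$ on some $G$-invariant open neighborhood $U$ of $G\times_K B$ in $G\times_K Y$. Pulling back along $\iota$ yields a continuous $K$-map $F\circ\iota\colon \iota^{-1}(U)\to X$ defined on the $K$-invariant open neighborhood $\iota^{-1}(U)$ of $B$ in $Y$, and this map extends $f$ because $\tilde f\circ \iota|_B=f$. Thus $X$ is a $K$-\textrm{ANE}. For the $G$-\textrm{AE} variant one may take $U=G\times_K Y$, so the restriction is defined on all of $Y$, giving $X$ the $K$-\textrm{AE} property.

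The main obstacle I expect is the verification that $G\times_K Y$ belongs to $G$-$\mathcal M$; without the largeness of $K$ the coset space $G/K$ may fail to be metrizable, placing the twisted product outside the class in which the $G$-\textrm{ANE} extension property is postulated. All the remaining checks—well-definedness and continuity of $\tilde f$, the closed embedding and slice properties of $\iota$, and the pullback of the extension—are routine.
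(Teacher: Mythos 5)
Your argument is essentially the paper's own: the proposition is quoted from \cite{ant:jap}, and Remark~\ref{R:K} confirms that the only non-routine step of that proof is exactly the one you isolate, namely that the twisted product $G\times_K Y$ lies in $G$-$\mathcal M$ because it admits a compatible $G$-invariant metric once $G/K$ and $Y$ are metrizable; the rest (the induced $G$-map $\tilde f$, the closed $K$-embedding $\iota$, and pulling the $G$-extension back along $\iota$) is the same reduction. The proof is correct; the one point to phrase more carefully is that metrizability of $G/K$ for a compact large subgroup $K$ follows from the structure theory of such coset spaces (they are manifolds, hence first countable, and first countable coset spaces are metrizable), not from the general implication that a locally compact, locally connected, finite-dimensional space is metrizable, which fails (e.g.\ for the long line).
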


\begin{remark}\label{R:K}  A careful analysis of  the proof of  \cite[Proposition 3.4]{ant:jap} shows that   this result is true
also for any compact subgroup $K$ of $G$ such that the coset space $G/K$ is just metrizable. Indeed, in the proof of 
 \cite[Proposition 3.4]{ant:jap} it is just needed that the twisted product $G\times_K S$ admits a $G$-invariant metric provided that $G/K$ and $S$ are metrizable. But this is true without assuming that $K$ is a large subgroup and this is proved explicitly in  \cite[Lemma 6.5]{ant:fund09}  and  \cite[Theorem 6.1]{ant:fund09}.
\end{remark}

\medskip
  
  The following proposition is well known  (see, e.g. \cite[Lemma 3.5]{abe:78}).
  
  \begin{proposition}\label{twist} Let  $H$ be a compact subgroup of $G$, $X$  a proper $G$-space and  $S$ a global $H$-slice of $X$.  Then  the  map  $\xi:G\times_H S\to X$ defined by $\xi([g, s])=gs$ is a $G$-homeomorphism.
\end{proposition}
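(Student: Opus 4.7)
The plan is to verify that $\xi$ is a continuous $G$-equivariant bijection and then construct a continuous inverse locally using cross-sections of the projection $G\to G/H$. The formal properties come essentially for free: the map $(g,s)\mapsto gs$ from $G\times S$ to $X$ is continuous and invariant under the $H$-action $h(g,s)=(gh^{-1},hs)$ defining $G\times_H S$, so it descends to a continuous map $\xi$. The identity $\xi(g'[g,s])=g'gs=g'\xi([g,s])$ shows $G$-equivariance, and surjectivity is just the assumption $G(S)=X$.

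The nontrivial algebraic point is injectivity, which uses the slicing map. Let $f\colon X\to G/H$ be the $G$-map with $S=f^{-1}(eH)$. If $g_1s_1=g_2s_2$, applying $f$ gives $g_1H=f(g_1s_1)=f(g_2s_2)=g_2H$, so $g_2=g_1h^{-1}$ for some $h\in H$; substituting yields $s_2=hs_1$, hence $(g_2,s_2)=h(g_1,s_1)$ and $[g_1,s_1]=[g_2,s_2]$ in $G\times_H S$.

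The main step is continuity of $\xi^{-1}$, and this is where compactness of $H$ is crucial. Since $H$ is a compact subgroup of the locally compact group $G$, the quotient projection $\pi\colon G\to G/H$ admits local cross-sections (a classical result of Mostow); pick an open $U\subset G/H$ containing $eH$ and a continuous $\sigma\colon U\to G$ with $\pi\sigma=\mathrm{id}_U$ and $\sigma(eH)=e$. For $x\in f^{-1}(U)$ one has $f(\sigma(f(x))^{-1}x)=\sigma(f(x))^{-1}f(x)=eH$, so $\sigma(f(x))^{-1}x\in S$, and the formula
\[
\nu_U(x)=\bigl[\sigma(f(x)),\ \sigma(f(x))^{-1}x\bigr]
\]
defines a continuous map $\nu_U\colon f^{-1}(U)\to G\times_H S$ with $\xi\circ\nu_U=\mathrm{id}$. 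Translating by elements $g_0\in G$ produces analogous continuous right-inverses of $\xi$ on the open cover $\{f^{-1}(g_0U):g_0\in G\}$ of $X$; these local inverses must agree on overlaps by injectivity of $\xi$, so they glue to a global continuous inverse.

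The only real obstacle is invoking the existence of a local cross-section for $\pi\colon G\to G/H$; once that tool is available, everything else is direct. In fact, properness of the $G$-action on $X$ is not explicitly needed in the argument (beyond being part of the hypothesis ensuring the slice framework is well-behaved), so the statement can be viewed purely as a consequence of the slice structure together with compactness of $H$.
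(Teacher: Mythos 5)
The paper itself offers no proof of Proposition \ref{twist} --- it is quoted as well known with a reference to Abels \cite[Lemma 3.5]{abe:78} --- so your argument has to stand on its own. Most of it does: continuity and $G$-equivariance of $\xi$ descend correctly from $G\times S$, surjectivity is $G(S)=X$, and the injectivity argument via the slicing map $f$ is exactly right. The gap is in the decisive step, continuity of $\xi^{-1}$: the assertion that for a compact subgroup $H$ of a locally compact group $G$ the projection $\pi\colon G\to G/H$ admits local cross-sections is false in this generality. The cross-section theorems you are invoking require $G$ to be a Lie group (Mostow, Chevalley), or at least $G/H$ to be finite-dimensional (Mostert). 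For a counterexample take $G=\prod_{n\in\mathbb N}SU(2)$ (compact, hence locally compact) and $H=\prod_{n\in\mathbb N}\{\pm I\}$, so that $G/H\cong\prod_{n}SO(3)$: a continuous local section over a basic neighborhood of $eH$, which constrains only finitely many coordinates, would produce in each remaining coordinate a global continuous section of the nontrivial double cover $SU(2)\to SO(3)$, which does not exist. Since the paper applies the proposition with $G$ an arbitrary locally compact group and $K$ an arbitrary compact (large) subgroup (e.g.\ in the proof of Theorem \ref{T:0}), this generality is not negotiable.

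The step can be repaired without any cross-sections. The self-homeomorphism $(g,x)\mapsto(g,gx)$ of $G\times G(S)$ carries $G\times S$ onto $E=\{(g,x)\mid f(x)=gH\}=(\pi\times\mathrm{id})^{-1}(\Gamma_f)$, where $\Gamma_f\subset G/H\times G(S)$ is the graph of $f$, and conjugates the $H$-action $h(g,s)=(gh^{-1},hs)$ into $h(g,x)=(gh^{-1},x)$ and the map $\xi$ into the map induced on $E/H$ by the projection $(g,x)\mapsto x$. Since $\pi\times\mathrm{id}$ is open and $E$ is a full preimage, $E\to\Gamma_f\cong G(S)$ is an open surjection whose fibers are precisely the $H$-orbits, so the induced bijection $E/H\to G(S)$ is a homeomorphism. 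Note that this argument uses neither compactness of $H$ nor properness of $X$, which vindicates your closing remark, but for a reason quite different from the one your proof relies on.
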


\medskip

The following  equivariant version of   Hanner's open union theorem \cite[Theorem 19.2]{ha:52} is proved in \cite[Corollary 5.7]{ant:jap}. A  short and beautiful proof of Hanner's theorem 
was given by J. Dydak \cite[Corollary 1.5]{dy:pp}.

\begin{theorem}[\cite{ant:jap}]\label{T:Un} Let $Z\in G$-$\mathcal M$.
 If a  $G$-space $Y$ is the  union of a  family of  invariant open 
$G$-${\rm ANE}(Z)$ subsets $Y_\mu\subset Y$, $\mu\in \mathcal M$, then $Y$ is a $G$-${\rm ANE}(Z)$. 
\end{theorem}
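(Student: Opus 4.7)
The plan is to adapt Dydak's streamlined proof \cite{dy:pp} of the classical Hanner open union theorem to the equivariant setting. The crucial input is that for $Z\in G$-$\mathcal{M}$ the orbit space $Z/G$ is metrizable, hence paracompact and normal. Because invariant open subsets of $Z$ and invariant continuous real-valued functions on $Z$ correspond bijectively to open subsets and continuous functions on $Z/G$, every Urysohn-type and shrinking-lemma step of the classical proof is available \emph{equivariantly} on $Z$ at no extra cost.

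I would split the argument into two stages. First, I would establish the two-set case: if $Y=Y_1\cup Y_2$ with $Y_1,Y_2$ invariant open $G$-${\rm ANE}(Z)$ subsets, then $Y$ is $G$-${\rm ANE}(Z)$. Given a $G$-map $f\colon A\to Y$ on a closed invariant $A\subset Z$, the sets $A_i:=f^{-1}(Y_i)$ are invariant and relatively open in $A$. Equivariant normality produces invariant closed subsets $B_i\subset A_i$ of $A$ with $B_1\cup B_2=A$, together with invariant open sets $W_i\subset Z$ satisfying $\overline{W_i}\cap A\subset A_i$ and $W_1\cup W_2$ covering an invariant neighborhood of $A$. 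Applying the $G$-${\rm ANE}(Z)$ property of $Y_i$ to $f|_{\overline{W_i}\cap A}$ gives a $G$-extension $g_i\colon V_i\to Y_i$ on an invariant open neighborhood $V_i\subset W_i$, and an additional invariant Urysohn-style shrinking lets one glue $g_1$ and $g_2$ on $V_1\cup V_2$ into a single $G$-extension of $f$ into $Y$. Finite unions follow by induction.

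The general case reduces to the finite case using paracompactness of $Z/G$: one picks a locally finite invariant open refinement of $\{f^{-1}(Y_\mu)\}_{\mu\in\mathcal{M}}$ inside an invariant neighborhood of $A$, well-orders the refinement, and assembles the extension by transfinite induction, applying the finite-union case at successor stages and taking unions at limit stages. I expect the principal obstacle to be the gluing step in the two-set case, since $g_1$ and $g_2$ agree on $A$ by construction but may differ on the overlap $V_1\cap V_2\setminus A$; the standard Hanner remedy is to invariantly shrink the domains so that, on the remaining overlap, one of them drops out entirely. This second shrinking is exactly what normality of $Z/G$ makes possible equivariantly, which is why the hypothesis $Z\in G$-$\mathcal{M}$ is placed on the test space rather than on $Y$ itself.
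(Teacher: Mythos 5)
The statement you were asked to prove is not actually proved in this paper: it is quoted from \cite[Corollary 5.7]{ant:jap}, with a pointer to Dydak's short proof \cite{dy:pp} of the classical case, so there is no in-paper argument to measure yours against. Your overall strategy --- pass to the metrizable orbit space $Z/G$ to obtain invariant Urysohn functions and locally finite invariant refinements, settle a two-set case, and then treat arbitrary unions via a locally finite refinement and transfinite induction --- is the standard Hanner-style route, and your observation that the hypothesis $Z\in G$-$\mathcal M$ is exactly what makes the normality and paracompactness steps available equivariantly is correct and well placed.

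However, both load-bearing steps are left in a form that, as described, would fail. First, in the two-set case, ``shrinking the domains so that on the remaining overlap one of them drops out entirely'' is not achievable: if $\lambda$ is the invariant Urysohn function separating $A\setminus A_1$ from $A\setminus A_2$, then any invariant neighborhoods $V_1,V_2$ of $\overline{W_1}\cap A$ and $\overline{W_2}\cap A$ whose union is still a neighborhood of $A$ must both contain points of $\lambda^{-1}(1/2)$ off $A$ but arbitrarily close to $A\cap\lambda^{-1}(1/2)$, where $g_1$ and $g_2$ generically disagree; no shrinking removes that overlap without destroying the neighborhood property at the interface. The working remedy is sequential rather than parallel: first extend $f$ over a neighborhood in $\lambda^{-1}([1/2,1])$ to get $g_2$; since $Y_1$ is \emph{open} and $g_2$ agrees with $f$ on $A\cap\lambda^{-1}(1/2)$, which $f$ maps into $Y_1\cap Y_2$, the open set $g_2^{-1}(Y_1)$ contains a closed invariant neighborhood $C_0$ of $A\cap\lambda^{-1}(1/2)$ in $\lambda^{-1}(1/2)$; one then extends the glued map $f\cup g_2|_{C_0}$ into $Y_1$ over a neighborhood in $\lambda^{-1}([0,1/2])$, which forces $g_1=g_2$ on $C_0$ and makes the piecewise definition continuous. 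Second, ``applying the finite-union case at successor stages'' is not a meaningful reduction: at a successor stage the partial extension already built takes values spread over many $Y_\nu$, whereas the finite-union lemma only extends maps given from scratch on a closed invariant subset. What is actually required at each stage is a further invariant shrinking of the domain of the partial extension so that its restriction to the new closed piece lands in the single $Y_\mu$ attached to that piece (again exploiting openness of $Y_\mu$ and agreement with $f$ on $A$), together with local finiteness to ensure the transfinitely many shrinkings stabilize near every point. These are precisely the places where Hanner's and Dydak's arguments do their real work, so as written your proposal is an outline of the correct strategy rather than a proof.
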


   \medskip
  
  \section{ Proof  of Theorem \ref{T:0}}
  
   By Theorem~ \ref{T:331}, $X$ has  an open invariant cover by tubular sets of the form $G(S)$, \  where each $S$ is a $K$-slice with  the slicing group $K$  a   compact large  subgroup of $ G$. Then the orbit space $X/G$ is the  union of its open subsets of the form $G(S)/G$. According to  
Hanner's open union theorem  in \cite[Theorem 19.2]{ha:52} or \cite[Corollary 1.5]{dy:pp} (see also  
Theorem \ref{T:Un}), it suffices to show that each $G(S)/G$ is an {\rm ANE}.

To this end, we first observe that  each $G(S)$ is $G$-homeomorphic to the twisted product $G\times_{K} S$ (see Proposition \ref{twist}). This implies that $G(S)/G$ is homeomorphic to $(G\times_{K} S)/G$.
Since $X\in G$-{\rm ANE}, the tubular set  $G(S)$, being  an  open invariant subset of  $X$, is itself  a $G$-{\rm ANE}. Thus, $G\times_K S$ is a $G$-{\rm ANE}. Since the slicing group $K$  is a compact large subgroup of $G$, one can apply Proposition \ref{P:large}, according to which 
$G\times_{K} S$ is a $K$-{\rm ANE}. 
Each $K$-orbit in $X$ is contained in a $G$-orbit, and hence, is metrizable. Since $K$ is compact,  Theorem \ref{T:compactcase}  implies that
$(G\times_{K} S)/K$ is an {\rm ANE}. By Proposition \ref{retractdirect}, $(G\times_{K} S)/G$ is homeomorphic to a retract of $(G\times_{K} S)/K$, and hence, is itself  an {\rm ANE}. Consequently, $G(S)/G$ is an {\rm ANE},  as required.
\qed

   \medskip

\section{Proof of Theorem \ref{T:almost}}\label{almost}

Since $X\in G$-$\mathcal M$ the orbit space $X/G$ is metrizible, and hence, by   Abels \cite[Main Theorem]{abe:74}, $X$ admits a global $K$-slice
$S$ where $K$ is a maximal compact subgroup of $G$. Then, by Proposition \ref{twist}, $X$ is $G$-homeomorphic to the twisted product $G\times_KS$. 

 Observe that for every  maximal compact subgroup $K\subset G$, the coset space $G/K$ is metrizable. Moreover, 
 $G/K$ is homeomorphic to a Euclidean space (see  \cite[Corollary A6]{abe:74}).
 
    Therefore,  
   one can apply  Proposition \ref{P:large} and Remark \ref{R:K},    according to which $G\times_{K} S$ is a $K$-{\rm ANE}. 

Since $G$ is almost connected,    one can apply Proposition \ref{abels}, according to which $S$ is a $K$-equivariant retract of  $G\times_{K} S$, and hence, $S$ is  a $K$-{\rm ANE}.

Further, one has the following $G$-homeomorphism:
$$(G\times_K S)/H\cong G/H\times_ K S.$$
 Indeed, the map that sends the point $[g, s]_H$ of \ $(G\times_K S)/H$  to the point  $[gH, s]$ of \ $G/H\times_K S$ is a $G/H$-homeomorphism, where $[g, s]_H$ denotes the $H$-orbit of  $[g, s]$ in $G\times_KS$ (the easy verification is left to the reader). 

Next we  observe that every $K\cap H$-orbit in $S$ is metrizable since it is  contained in the corresponding $H$-orbit  in $X$, which is metrizable by the hypothesis. Further, since $S\in K$-{\rm ANE},  it then follows from Proposition \ref{P:37}  that the twisted product $G/H\times_ K S$ is a $G/H$-{\rm ANE}. This yields that  $(G\times_ K S)/H\in G/H$-{\rm ANE}, and since $X/H$ is $G/H$-homeomorphic to 
$(G\times_ K S)/H$, we conclude that $X/H\in G/H$-{\rm ANE}, as required.
 \qed

  \medskip
  
  \section{ Proof  of Theorem \ref{T:00}}
    
 By Theorem~ \ref{T:Sl},
 $X$ has  an open invariant cover by tubular sets of the form $G(S)$, \  where each $S$ is a $K$-slice  with the slicing group  $K$ a compact   subgroup of $G$.  Then the $G/H$-space $X/H$ is the  union of its open $G/H$-invariant subsets of the form $G(S)/H$. According to  Theorem \ref{T:Un}, it suffices to show that each $G(S)/H$ is a $G/H$-{\rm ANE}.

To this end, we first observe that  each $G(S)$ is $G$-homeomorphic to the twisted product $G\times_{K} S$ (see Proposition \ref{twist}). 

This yields that $G(S)/H$ is $G/H$-homeomorphic to 
$(G\times_ K S)/H$.
Since $X\in G$-{\rm ANE}, the tubular set  $G(S)$, being  an  open invariant subset of  $X$, is itself  a $G$-{\rm ANE}. Thus, $G\times_{K} S$ is a $G$-{\rm ANE}. Since $G$ is a Lie group, we infer that $G/K$ is   metrizable (moreover,
evidently, $K$ is a compact large subgroup in this case). Then one can apply Proposition \ref{P:large}, according to which 
$G\times_{K} S$ is a $K$-{\rm ANE}. 
By   Proposition \ref{P:1}, $S$ is a $K$-equivariant neighborhood retract of  $G\times_{K} S$, and hence,  $S$ is  a $K$-{\rm ANE}.

Further, as we mentioned in the proof of Theorem \ref{T:almost}, one has the  following $G$-homeomorphism: 
$$(G\times_K S)/H\cong G/H\times_ K S.$$
 Since $S\in K$-{\rm ANE},  it then follows from Proposition \ref{P:37}  that the twisted product $G/H\times_ K S$ is a $G/H$-{\rm ANE}. This yields that  $(G\times_ K S)/H\in G/H$-{\rm ANE}, and since, $G(S)/H$ is $G/H$-homeomorphic to 
$(G\times_ K S)/H$, we conclude that $G(S)/H\in G/H$-{\rm ANE}, as required.
 \qed

\medskip

\section{Lifting  of equivariant embeddings and extension properties of orbit spaces}\label{lifting}

The lifting properties of $G$-equivariant closed embeddings for a compact  acting group $G$ were first established in \cite{ant:preserv}.  Below, in  Theorems \ref{T:lifting1} and \ref{T:anygroup} 
we generalize these results to the case of proper actions of non-compact  groups.  In turn, this allows us  to strengthen  
 Theorems~\ref{T:0}, \ref{T:almost} and \ref{T:00},  discarding  in their statements the hypothesis about the properness of the $G$-space $X$.

\begin{theorem}\label{T:lifting1} 

 Let   $G$ be either a Lie group or an almost connected  group, and let $H$ be  a closed  normal subgroup  of $G$. Suppose that    $A\in G$-$\mathcal M$ and
 $f:A/H\hookrightarrow  B$
 is a $G/H$-equivariant closed embedding into  a $G/H$-space  $B\in G/H$-$\mathcal M$. Then there exist
 a $G$-space  $Z\in G$-$\mathcal M$ and a $G$-equivariant closed embedding
 $\phi:A\hookrightarrow Z$
such that $Z/H$ is a  $G/H$-invariant neighborhood of $A/H$ in $B$
and $q\circ\phi=f\circ p$, where \ $p:A\to A/H$ and \  $q:Z\to
Z/H$ are the $H$-orbit maps.
\end{theorem}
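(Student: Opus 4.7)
My plan is to reduce the lifting problem to the compact acting group case, established in \cite{ant:preserv}, by exploiting the slice-theoretic structure of the proper $G$-action on $A$. Under either hypothesis on $G$, $A$ admits a twisted-product description $G\times_K S$ with $K\subset G$ compact, either globally (almost connected case, via Abels' structure theorem as used in Theorem~\ref{T:almost}) or locally (Lie case, via the Slice Theorem~\ref{T:Sl}). The desired $G$-space $Z$ is then built as $G\times_K T$ for a suitable enlargement $S\hookrightarrow T$ supplied by the compact lifting.

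In the almost connected case, let $K\subset G$ be a maximal compact subgroup and $S\subset A$ a global $K$-slice with $A\cong G\times_K S$. Then $A/H\cong G/H\times_K S$, and the image of $S$ in $A/H$ is $S/(K\cap H)$, a closed subspace invariant under $\bar K:=KH/H\cong K/(K\cap H)\subset G/H$. Via $f$, this subspace embeds into $B$ as a closed $K/(K\cap H)$-equivariant subset, where $B$ is viewed as a $K/(K\cap H)$-space through the inclusion $KH/H\subset G/H$. I apply the compact equivariant lifting of \cite{ant:preserv} to the compact group $K$, the normal subgroup $K\cap H$, the $K$-space $S$, and this embedding, obtaining a $K$-space $T\in K$-$\mathcal M$, a closed $K$-equivariant embedding $S\hookrightarrow T$, and an identification of $T/(K\cap H)$ with a $\bar K$-invariant open neighborhood of $S/(K\cap H)$ in $B$. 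Setting $Z:=G\times_K T$ (which belongs to $G$-$\mathcal M$ by the metrizability result behind Remark~\ref{R:K}) and letting $\phi:A=G\times_K S\hookrightarrow G\times_K T=Z$ be the closed $G$-equivariant embedding induced by $S\hookrightarrow T$, the $H$-orbit space $Z/H\cong G/H\times_K T$ carries a natural $G/H$-equivariant continuous map to $B$, $[gH,t]\mapsto (gH)\cdot [t]_B$, that satisfies $q\circ\phi=f\circ p$ by construction.

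For the Lie case, the Slice Theorem~\ref{T:Sl} provides an open cover of $A$ by tubular sets $G\times_{K_x}S_x$ with $K_x=G_x$ compact; I carry out the above construction on each slice, producing local lifts $Z_x=G\times_{K_x}T_x$, which must then be patched into a global $Z$ via a Hanner-style gluing (Theorem~\ref{T:Un}) along tube intersections. The principal technical obstacle, shared by both cases, is to verify that the natural map $Z/H\to B$ is a $G/H$-equivariant \emph{homeomorphism} onto an open neighborhood of $A/H$, not merely a continuous map; injectivity here reduces to ensuring that $T/(K\cap H)$ behaves as a $\bar K$-slice inside its $G/H$-saturation in $B$. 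I expect to arrange this by first extracting a $\bar K$-slice neighborhood $W$ of $S/(K\cap H)$ in $B$ via the (approximate) Slice Theorem~\ref{T:331} for the proper $G/H$-action on $B$, and then applying the compact lifting inside $W$, thereby forcing $T/(K\cap H)\subset W$. The patching in the Lie case constitutes the further delicate step, requiring compatibility of the local $T_x$'s over overlaps.
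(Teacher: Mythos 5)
Your route is genuinely different from the paper's, and as it stands it has two real gaps rather than just deferred technicalities. The paper avoids slices entirely: it embeds $A$ as a closed invariant subset of a $G$-AE space $L\in G$-$\mathcal M$ (via the equivariant embedding theorem of \cite{ant:elena}), observes that $L/H$ is a $G/H$-ANE by Theorems~\ref{T:almost} and \ref{T:00}, extends $f^{-1}:f(A/H)\to A/H\hookrightarrow L/H$ to a $G/H$-map $F:U\to L/H$ on a neighborhood $U$ of $A/H$ in $B$, and takes $Z$ to be the fiber product $\{(u,x)\in U\times L : F(u)=r(x)\}$. The identification $Z/H\cong U$ is then automatic from the standard pull-back property of principal-type orbit maps, so the issue you flag as your ``principal technical obstacle'' simply never arises.

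The first gap in your plan is exactly that obstacle: for $Z=G\times_K T$ to satisfy the conclusion you need the natural map $G/H\times_K T\to B$ to be an open embedding, which amounts to $T/(K\cap H)$ being a $\bar K$-slice in $B$. Your proposed fix via the Approximate Slice Theorem~\ref{T:331} does not deliver this: that theorem produces a slice through a \emph{single point}, with a slicing subgroup that is some compact large subgroup containing the stabilizer and in general different from $\bar K$, whereas you need a $\bar K$-slice neighborhood of the entire closed $\bar K$-invariant set $f\bigl(S/(K\cap H)\bigr)$, compatible with the preassigned twisted-product structure $A/H\cong G/H\times_{\bar K}\bigl(S/(K\cap H)\bigr)$. (What would actually do the job is extending the classifying $G/H$-map $A/H\to (G/H)/\bar K$ over a neighborhood in $B$, using that this coset space is a $G/H$-ANE; but that is a different argument and you would still have to set it up.) The second and more serious gap is the Lie case: Theorem~\ref{T:Un} states that a union of open $G$-ANE subsets \emph{of a given ambient $G$-space} is a $G$-ANE; it is not a gluing device for abstractly constructed local spaces $Z_x=G\times_{K_x}T_x$. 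Your local lifts carry no compatibility isomorphisms over tube intersections, so there is no candidate global $Z$ to which any union theorem could be applied, and producing such compatibility data is not a routine matter. Since the Lie case is half the theorem, the proposal as written does not close.
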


\begin{proof} According to \cite[Theorem~6.1]{ant:elena}, it can be assumed that A is a
closed $G$-invariant subset of a $G$-AE space $L\in G$-$\mathcal M$. Then $A/H$ is  a closed invariant
subset of the $G/H$-space $L/H$.

\begin{displaymath}
 \xymatrix{ Z  \ar[d]^{q} & A \ar @{_{(}->} [l]_{\phi} \ar @{^{(}->} [r]^{} \ar[d]^{p}
& L  \ar[d]^{r}  \\
U  \ar @/_/[rr]_F  & A/H \ar @{_{(}->} [l]_{f} \ar @{^{(}->} [r]^{}  & L/H }
\end{displaymath}

\medskip

\noindent 
Now, by Theorem~\ref{T:almost} (for almost connected groups) and Theorem
\ref{T:00} (for Lie groups), $L/H\in G/H$-ANE. Therefore, there exist a $G/H$-equivariant
extension. $F:U\to L/H$ of the $G/H$-map
$f^{-1}:f(A/H)\to A/H \hookrightarrow L/H$ defined on some $G/H$-neighborhood $U$ of the set $A/H$ in $B$. 

Let $r:L \to L/H$ be the $H$-orbit projection. Denote by
$Z$ the pull-back (or fiber product) of  $L$  with respect to the maps $F$ and $r$, i.e.,
$$ Z=\{(u,x)\in U\times L\ | \  F(u)=r(x)\}.$$
 We will consider the coordinate-wise defined action of the group $G$ on $Z$, i.e., $g(u,x)=(gu,gx)$ for
$g\in G$ and $(u, x)\in Z$. Let $h:Z/H\to U$ be the map defined by the formula $h(q(u,x))=u$, where
$q:Z\to Z/H$ is the $H$-orbit projection and $(u,x)\in  Z$. It is clear that $h$ is a well-defined
$G/H$-equivariant map. It can easily be shown (and this is well known, see \cite[Ch.4,
Proposition~4.1]{huse:94}) that $h$ is a homeomorphism.

 On the other hand the product $U\times
L$ is a proper $G$-space because $L$ is so. Besides, since $U$ and $L$ admit $G$-invariant metrics,
we infer that $U\times L$ also has a $G$-invariant metric. Thus $U\times L\in G$-$\mathcal M$, which
implies that $Z\in G$-$\mathcal M$.

It remains to define the $G$-equivariant embedding $\phi:A \hookrightarrow Z$ by the formula
$\phi (a)=(f(p(a)),a)$ for $a\in$ A. This completes the proof.
\end{proof}

\medskip

\begin{theorem}\label{T:anygroup}
 Let   $G$ be any locally compact   group. Assume  that    $A\in G$-$\mathcal M$ and
 $f:A/G\hookrightarrow  B$ is a closed embedding into  a metrizable  space $B$. Then there exist
 a $G$-space  $Z\in G$-$\mathcal M$ and a $G$-equivariant closed embedding
 $\phi:A\hookrightarrow Z$
such that $Z/G$ is a  neighborhood of $A/G$ in $B$
and $q\circ\phi=f\circ p$, where \ $p:A\to A/G$ and \  $q:Z\to
Z/G$ are the $G$-orbit maps.
\end{theorem}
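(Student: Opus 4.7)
The plan is to follow the template of the proof of Theorem \ref{T:lifting1}, specialized to the degenerate case $H=G$ (so $G/H$ is trivial and the assumption that $B$ is a $G/H$-space reduces to $B$ being merely metrizable). The role played by Theorems \ref{T:almost} and \ref{T:00} in the earlier proof will here be played by Theorem \ref{T:0}, which is precisely the $H=G$ version for arbitrary locally compact $G$.

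First I would invoke \cite[Theorem 6.1]{ant:elena} to embed $A$ as a closed $G$-invariant subset of some $G$-AE space $L\in G$-$\mathcal M$; then $A/G$ becomes a closed subset of $L/G$. Since $L$ is a $G$-ANE and, being metrizable, all its $G$-orbits are metrizable, Theorem \ref{T:0} gives $L/G\in{\rm ANE}$. This lets me extend the map $f^{-1}:f(A/G)\to A/G\hookrightarrow L/G$ to a continuous map $F:U\to L/G$ defined on an open neighborhood $U$ of $A/G$ in $B$.

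Next, mimicking the construction in the proof of Theorem \ref{T:lifting1}, I would form the pull-back
$$Z=\{(u,x)\in U\times L\mid F(u)=r(x)\},$$
where $r:L\to L/G$ is the orbit projection. Equip $B$ (hence $U$) with the trivial $G$-action and let $G$ act on $U\times L$ coordinatewise, i.e.\ $g(u,x)=(u,gx)$; then $Z$ is $G$-invariant. The standard pull-back fact cited as \cite[Ch.~4, Proposition~4.1]{huse:94} gives a homeomorphism $h:Z/G\to U$ via $h(q(u,x))=u$, identifying $Z/G$ with the neighborhood $U$ of $A/G$ in $B$. The embedding is then $\phi(a)=(f(p(a)),a)$, which is manifestly $G$-equivariant, a closed embedding (since $L\hookrightarrow U\times L$ by $a\mapsto(f(p(a)),a)$ is closed and $\phi(A)$ is the image of $A$ inside $Z$), and satisfies $q\circ\phi=f\circ p$ by construction.

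It remains to verify $Z\in G$-$\mathcal M$. The product $U\times L$ is a proper $G$-space because properness is inherited from the factor $L$ (the action on $U$ being trivial contributes nothing to stabilizers or thin-relation data), and it admits a $G$-invariant metric obtained by combining any compatible metric on $U$ with a $G$-invariant metric on $L$; the closed $G$-invariant subspace $Z$ then inherits both properties. The main obstacle, really the only nontrivial input, is the ANE property of $L/G$, which is exactly what Theorem \ref{T:0} supplies; everything else is a routine pull-back argument identical to the one in Theorem \ref{T:lifting1}, simplified by the trivial $G$-action on $B$.
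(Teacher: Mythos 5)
Your proposal is correct and is essentially the paper's own proof: the paper proves Theorem \ref{T:anygroup} precisely by repeating the argument of Theorem \ref{T:lifting1} with $H=G$ and substituting Theorem \ref{T:0} for Theorems \ref{T:almost}/\ref{T:00}, which is exactly what you have written out (with the trivial $G/H$-action on $B$ made explicit). No gaps.
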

\begin{proof} Repeat the proof of Theorem \ref{T:lifting1} with $H=G$, where you simply  need to replace the reference to Theorem~\ref{T:00} with a reference to Theorem~\ref{T:0}.
\end{proof}

\medskip

By virtue of Theorems \ref{T:lifting1} and \ref{T:anygroup}, one can dropp the hypothesis about the properness of the $G$-space $X$ in Theorems~\ref{T:0},  \ref{T:almost} and \ref{T:00}.

\begin{theorem}[Non-proper actions of Lie groups and almost connected groups]\label{T:NP1}
 Let   $G$ be either a Lie group or an almost connected  group,
 and let $X$ be any  $G$-${\rm {\rm ANE}}$. Assume that $H$ is a closed  normal subgroup of $G$
such that all  $H$-orbits in $X$ are metrizable. Then the $H$-orbit space $X/H$ is a $G/H$-${\rm
ANE}$.
\end{theorem}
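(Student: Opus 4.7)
The plan is to reduce the non-proper statement to the proper-action versions (Theorems~\ref{T:almost} and \ref{T:00}) by means of the lifting machinery in Theorems~\ref{T:lifting1} and \ref{T:anygroup}. Given a test diagram $f\colon A_0\to X/H$, where $A_0$ is a closed $G/H$-invariant subset of some $A\in G/H$-$\mathcal M$, I will lift it to a $G$-equivariant extension problem with target $X$ via an $H$-orbit pullback, apply the $G$-{\rm ANE} property of $X$, and then descend to $X/H$.

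Concretely, form the pullback
\[
\widetilde{A_0}\;:=\;A_0\times_{X/H}X\;=\;\{(a,x)\in A_0\times X:\pi(x)=f(a)\},
\]
where $\pi\colon X\to X/H$ is the $H$-orbit map. The diagonal rule $g\cdot(a,x)=(gH\cdot a,\,gx)$ makes $\widetilde{A_0}$ a $G$-space, the second projection is a $G$-equivariant map $\widetilde{f}\colon\widetilde{A_0}\to X$, and the first projection identifies $\widetilde{A_0}/H\cong A_0$ as $G/H$-spaces (the fibers being individual $H$-orbits of $X$). Granting that $\widetilde{A_0}\in G$-$\mathcal M$, Theorem~\ref{T:lifting1} (or Theorem~\ref{T:anygroup} in the case $H=G$) applied to the closed $G/H$-embedding $\widetilde{A_0}/H=A_0\hookrightarrow A$ produces a $G$-space $Z\in G$-$\mathcal M$, a closed $G$-equivariant embedding $\widetilde{A_0}\hookrightarrow Z$, and a $G/H$-invariant neighborhood $Z/H$ of $A_0$ in $A$. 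The $G$-{\rm ANE} property of $X$, applied to $\widetilde{f}$ with $\widetilde{A_0}$ sitting as a closed $G$-invariant subset of $Z\in G$-$\mathcal M$, then extends $\widetilde{f}$ to a $G$-map $\widetilde{F}\colon U\to X$ on a $G$-invariant neighborhood $U$ of $\widetilde{A_0}$ in $Z$. Passing to $H$-orbit spaces, the formula $F([z]_H):=\pi(\widetilde{F}(z))$ is well-defined by the $G$-equivariance of $\widetilde{F}$ and defines a $G/H$-equivariant extension $F\colon U/H\to X/H$ of $f$ on the $G/H$-neighborhood $U/H$ of $A_0$ in $Z/H\subset A$; this proves $X/H\in G/H$-{\rm ANE}.

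The main obstacle is the bracketed claim that $\widetilde{A_0}\in G$-$\mathcal M$. Metrizability should be obtainable from the metrizability of $A_0\subset A$ and of each $H$-orbit fiber (hypothesis) by a standard bundle-type argument, but properness of the $G$-action on $\widetilde{A_0}$ is more delicate: for a candidate small set $W=(U\times V)\cap\widetilde{A_0}$ near a point $(a_0,x_0)$, one finds $\langle W,W\rangle_G\subset p^{-1}(K_1)\cap\{g\in G:gV\cap V\neq\emptyset\}$, where $p\colon G\to G/H$ is the projection and $K_1\subset G/H$ is a compact set furnished by the proper $G/H$-action on $A_0$; the $H$-saturated preimage $p^{-1}(K_1)$ is not compact when $H$ is non-compact, and one must use the metrizability of the $H$-orbits of $X$ together with the Tychonoff structure of $X$ to shrink $V$ so as to force the intersection to have compact closure in $G$, thereby recovering properness of $\widetilde{A_0}$ even though $X$ itself need not be proper.
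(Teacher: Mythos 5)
Your architecture coincides with the paper's proof of Theorem~\ref{T:NP1}: pull $X$ back along the $H$-orbit map over the test pair, use Theorem~\ref{T:lifting1} (resp.\ Theorem~\ref{T:anygroup} when $H=G$) to realize the pullback as a closed $G$-invariant subset of some $Z\in G$-$\mathcal M$ with $Z/H$ a $G/H$-neighborhood of $A_0$ in $A$, extend the second projection using $X\in G$-${\rm ANE}$, and descend to $H$-orbit spaces. The pullback, the diagonal action, the identification $\widetilde{A_0}/H\cong A_0$, and the descent step are all exactly as in the paper.

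The gap is the one you flag yourself and do not close: the claim $\widetilde{A_0}\in G$-$\mathcal M$, which is indispensable both for invoking Theorem~\ref{T:lifting1} and (via the resulting $Z$) for invoking the $G$-${\rm ANE}$ property of $X$. For the metrizability half you should replace the vague ``standard bundle-type argument'' by the paper's argument: the $H$-orbit of $(a,x)\in\widetilde{A_0}$ lies in the metrizable space $A_0\times H(x)$, so all $H$-orbits of $\widetilde{A_0}$ are metrizable, its $H$-orbit space $\widetilde{A_0}/H\cong A_0$ is metrizable, and then \cite[Theorem 6.1]{ant:fund09} gives metrizability of $\widetilde{A_0}$. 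Your sketched route to properness, however, cannot succeed as stated: no shrinking of the $X$-factor $V$ will force $\langle W,W\rangle_G$ to be relatively compact, because the stabilizer of $(a,x)$ for the diagonal action contains $H\cap G_x$, and since $X$ is not assumed proper this subgroup need not be compact (an $H$-fixed point $x$ has a one-point, hence metrizable, $H$-orbit while $H\cap G_x=H$); a proper action has compact stabilizers, so no choice of neighborhoods can repair this. Thus the step you defer is precisely the one carrying the real content, and it cannot be settled by the direct estimate you propose; as written, the proof is incomplete at exactly the point where the paper leans on the cited external result.
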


\begin{proof} Let  $B\in G/H$-$\mathcal M$. Let $L$ be  a closed
$G/H$-invariant subset of $B$ and let $s:L\to X/H$ be  a $G/H$-map. Define $A\subset L\times X$ to
be the pull-back of the $G$-space $X$ with respect to $s$ and $t$, where $t:X\to X/H$ is the
$H$-orbit map. Then $A$ is a $G$-invariant subspace of $L\times X$ endowed with the diagonal action
of $G$, and we have $A/H=L$ (see \cite[Ch.~4, Proposition~4.1]{huse:94}). Since the $H$-orbit of
each point $a=(l,x)\in A$ lies in the metrizable  space $L\times H(x)$, we conclude that $H(a)$ is
metrizable too. So, all $H$-orbits of the $G$-space $A$, as well as its $H$-orbit space $A/H=L$, are
metrizable. By \cite[Theorem 6.1]{ant:fund09}, $A$ is metrizable. Now applying  Theorem~\ref{T:lifting1},
 we get a $G$-space $Z\in G$-$\mathcal M$  with $Z/H$ a
$G/H$-invariant neighborhood of $L$ in $B$ such that $A$ is a closed
$G$-invariant subspace of $Z$.

Let $\psi:A\to X$ be the restriction of the projection $L\times
X\to X$. Since $X\in G$-ANE, there
exist a $G$-invariant neighborhood $U$ of $A$ in $Z$  and a $G$-extension $\alpha:U\to X$ of the $G$-map $\psi$.
It is easy to see that the induced $G/H$-map $\beta:U/H\to X/H$ is
the desired $G/H$-extension of $s$. This completes the proof.
\end{proof}

\medskip

\begin{theorem}[Non-proper actions of locally compact groups with $H=G$]\label{T:NP2}
Let $G$ be a locally compact  group and $X$   any $G$-space such that  all  $G$-orbits in $X$ are metrizable. 
 If  $X$ is a $G$-${\rm ANE}$ then the $G$-orbit space  $X/G$ is an {\rm ANE}. 
\end{theorem}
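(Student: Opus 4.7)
The plan is to mimic essentially verbatim the argument used for Theorem \ref{T:NP1}, specializing to the case $H=G$ (so $G/H$ is the trivial group, and a $G/H$-space is just a space, a $G/H$-ANE is just an ANE), and replacing the appeal to Theorem \ref{T:lifting1} with an appeal to Theorem \ref{T:anygroup}. Concretely, to show $X/G$ is an ANE, I would fix a metrizable space $B$, a closed subset $L\subset B$, and a continuous map $s:L\to X/G$, and aim to extend $s$ continuously to a neighborhood of $L$ in $B$.

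To reduce this to the hypothesis $X\in G$-ANE, I would form the pull-back
$$A=\{(l,x)\in L\times X \mid s(l)=t(x)\},$$
where $t:X\to X/G$ is the orbit map, and equip $L\times X$ with the action $g(l,x)=(l,gx)$ (here $L$ carries the trivial $G$-action, being a $G/G$-space). Then $A$ is a $G$-invariant subspace of $L\times X$ with $A/G=L$, by the standard identification of pull-back and orbit-space-as-base (cf.\ \cite[Ch.~4, Proposition~4.1]{huse:94}). Each $G$-orbit in $A$ lies in $\{l\}\times G(x)$, hence is metrizable by hypothesis; together with metrizability of $A/G=L$, the result \cite[Theorem~6.1]{ant:fund09} yields that $A\in G$-$\mathcal M$. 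With $A\in G$-$\mathcal M$ in hand, Theorem \ref{T:anygroup} applied to the closed embedding $L=A/G\hookrightarrow B$ supplies a $G$-space $Z\in G$-$\mathcal M$ and a closed $G$-equivariant embedding $\phi:A\hookrightarrow Z$ such that $Z/G$ is an open neighborhood of $L$ in $B$ and the obvious square commutes.

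Finally, the projection $\psi:A\to X$, $(l,x)\mapsto x$, is a $G$-map, so the $G$-ANE property of $X$ (applied to the pair $(Z,A)$ with $Z\in G$-$\mathcal M$) produces a $G$-invariant neighborhood $U$ of $A$ in $Z$ and a $G$-equivariant extension $\alpha:U\to X$ of $\psi$. The induced map $\beta:U/G\to X/G$ on orbit spaces is then continuous, and by construction $\beta$ extends $s$ on the neighborhood $U/G\subset Z/G$ of $L$ in $B$. The main conceptual step—already isolated in the proof of Theorem \ref{T:NP1}—is verifying that the pull-back $A$ belongs to $G$-$\mathcal M$; once this is granted via \cite[Theorem~6.1]{ant:fund09}, the rest of the argument is a routine application of Theorem \ref{T:anygroup} and the extension property of $X$.
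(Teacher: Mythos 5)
Your proposal is correct and follows exactly the route the paper takes: the paper's proof of this theorem is precisely ``repeat the proof of Theorem~\ref{T:NP1} with $H=G$, replacing the appeal to Theorem~\ref{T:lifting1} by Theorem~\ref{T:anygroup}.'' Your spelled-out version of the pull-back construction, the verification that $A\in G$-$\mathcal M$ via \cite[Theorem~6.1]{ant:fund09}, and the final extension step all match the paper's argument.
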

\begin{proof} Repeat the proof of Theorem \ref{T:NP1}, where you  simply need to replace the reference to Theorem~\ref{T:lifting1} with a reference to Theorem~\ref{T:anygroup}. 
\end{proof}

\medskip

\section{Appendix}\label{append}

In this section we will present in detail some simple and very useful propositions that culminate in a proof of Proposition \ref{retractdirect}.

 \begin{proposition}\label{closed}
 
Let $G$ be a topological group, $H$ a closed subgroup of $G$, and $X$ a $G$-space.
 Then for any closed subset $B\subset X$, the set $A=\{(h^{-1}, hb) \mid h\in H, \  b\in B\}$ is  closed in the product
$G\times X$.
 \end{proposition}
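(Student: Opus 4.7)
My plan is to exhibit a self-homeomorphism of $G\times X$ that carries $A$ onto the obviously closed set $H\times B$. Define
\[
\Psi : G\times X \to G\times X, \qquad \Psi(g,x)=(g,\,gx).
\]
Its inverse is $(g,y)\mapsto(g,g^{-1}y)$; both maps are continuous because $G$ is a topological group (so $g\mapsto g^{-1}$ is continuous) and the action of $G$ on $X$ is continuous. Hence $\Psi$ is a homeomorphism of $G\times X$.

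Next I would compute the image of $A$ under $\Psi$. For any $h\in H$ and $b\in B$,
\[
\Psi(h^{-1},hb)=\bigl(h^{-1},\,h^{-1}\!\cdot hb\bigr)=(h^{-1},b),
\]
and conversely every point of the form $(h^{-1},b)$ with $h\in H$, $b\in B$ is obtained this way. Therefore
\[
\Psi(A)=\{(h^{-1},b)\mid h\in H,\ b\in B\}=H^{-1}\times B=H\times B,
\]
where the last equality uses that $H$ is a subgroup, so $H^{-1}=H$.

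Finally, since $H$ is closed in $G$ and $B$ is closed in $X$, the product $H\times B$ is closed in $G\times X$. Because $\Psi$ is a homeomorphism, $A=\Psi^{-1}(H\times B)$ is closed in $G\times X$, as required.

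The argument is essentially a one-step trick; the only thing that could go wrong is forgetting to verify that $\Psi$ really is a homeomorphism (which needs continuity of inversion in $G$), but in our setting $G$ is a topological group so this is automatic. No obstacle of substance arises.
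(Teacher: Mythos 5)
Your proof is correct, and it takes a genuinely different (and cleaner) route than the paper. The paper argues directly with nets: it takes a net $(h_i^{-1}, h_i b_i)$ converging to a point $(g,x)$, deduces $g\in H$ from closedness of $H$, recovers $b_i = h_i^{-1}(h_i b_i)\to gx$ from joint continuity of the action, and concludes $gx\in B$ from closedness of $B$, so that $(g,x)=(g, g^{-1}(gx))\in A$. Your argument packages exactly the same algebraic manipulation into the global shear homeomorphism $\Psi(g,x)=(g,gx)$, which carries $A$ onto $H^{-1}\times B = H\times B$; closedness then follows at once from closedness of $H$ and $B$ and the fact that $\Psi$ is a homeomorphism. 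The two proofs use the same ingredients (continuity of inversion, joint continuity of the action, $H$ a closed subgroup, $B$ closed), but yours avoids nets entirely, is shorter, and makes transparent that the hypotheses are used exactly where they are needed. No gap.
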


\begin{proof} 
Assume that  $(g, x)$ is a closure point of $A$ and prove that $(g, x)\in A$.   
 There exist nets  $(h_i)\subset H$ and  $(b_i)\subset B$   such that the net $(h_i^{-1}, h_ib_i)$ converges to $(g, x)$. 
This yields that 
$(h_i^{-1})$ converges to $g$ and  $(h_ib_i)$ converges to $x$. Since $H$ is closed we infer that $g\in H$. Clearly, $b_i=h_i^{-1}(h_ib_i)$ converges to $gx$. Since $b_i\in B$ and $B$ is closed, we infer that $gx\in B$. Thus, $(g, x)=(g, g^{-1}gx)$ where   $g\in H$ and $gx\in B$. This shows  that 
 $(g, x)\in A$, as required.

\end{proof}

The following proposition is well-known in the literature only for the compact subgroup $H$.

 \begin{proposition}\label{closed}
 
Let $G$ be a topological group and $H$ a closed subgroup of $G$. If $X$ is an $H$-space, then  the map $\iota:X \hookrightarrow G \times_H X, \; \iota(x) = [e,x]$ is a closed $H$-embedding, where $e\in G$ is the unit element.
 \end{proposition}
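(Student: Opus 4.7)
The plan is to verify each piece of the conclusion (equivariance, injectivity, continuity, embedding, closedness of the image) separately, and to lean on the preceding proposition for closedness and on a universal-property argument for the embedding part.

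First I would dispose of the routine facts. The map $\iota$ is the composition of the continuous slice inclusion $x\mapsto(e,x)$ with the orbit projection $\pi\colon G\times X\to G\times_H X$, so it is continuous. For $H$-equivariance, recall that the $H$-action on $G\times X$ is $k(g,x)=(gk^{-1},kx)$, hence $h\cdot(h,x)=(e,hx)$, which gives $[e,hx]=[h,x]=h\cdot[e,x]$, i.e.\ $\iota(hx)=h\cdot\iota(x)$. For injectivity, $\iota(x)=\iota(y)$ means that some $h\in H$ satisfies $h(e,x)=(e,y)$, forcing $h^{-1}=e$ and $y=hx=x$.

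Second, to see that $\iota(X)$ is closed in $G\times_H X$, I would compute the $\pi$-saturation of the image. A point $[g,y]$ lies in $\iota(X)$ precisely when some $h\in H$ takes $(g,y)$ to a pair of the form $(e,\cdot)$, which happens iff $g\in H$; hence $\pi^{-1}(\iota(X))=H\times X$. Now the preceding Proposition~\ref{closed}, applied with $B=X$, says that $\{(h^{-1},hx)\mid h\in H,\ x\in X\}$ is closed in $G\times X$, and this set coincides with $H\times X$. Since $\pi$ is a quotient map, closedness of $\pi^{-1}(\iota(X))$ forces $\iota(X)$ to be closed.

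Third, for the embedding property I would exhibit a continuous retraction $\bar\sigma\colon\iota(X)\to X$ onto $X$ via $\iota$. Define $\sigma\colon H\times X\to X$ by $\sigma(h,x)=hx$; it is continuous, and constant on the restricted $H$-orbits, since
\[
\sigma(k\cdot(h,x))=\sigma(hk^{-1},kx)=(hk^{-1})(kx)=hx=\sigma(h,x).
\]
The orbit projection $\pi$ is open (a standard fact about actions of topological groups), and $H\times X$ is $\pi$-saturated, so the restriction $\pi|_{H\times X}\colon H\times X\to\iota(X)$ is also open, hence a quotient map onto $\iota(X)$ endowed with the subspace topology from $G\times_H X$. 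By the universal property of quotient maps, $\sigma$ descends to a continuous map $\bar\sigma\colon\iota(X)\to X$ with $\bar\sigma([h,x])=hx$. In particular $\bar\sigma\circ\iota=\mathrm{id}_X$, so the continuous injection $\iota\colon X\to\iota(X)$ has a continuous left inverse and is therefore a homeomorphism onto its image.

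The only nontrivial point in this program is verifying that $\pi|_{H\times X}$ is a quotient map onto $\iota(X)$ with the subspace topology; everything else is a direct computation or an immediate application of the preceding proposition. That step will be handled by combining openness of $\pi$ with $\pi$-saturatedness of $H\times X$.
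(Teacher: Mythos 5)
Your proposal is correct, but it reaches the conclusion by a genuinely different route from the paper. The routine parts (continuity as a composition with the orbit projection, injectivity, $H$-equivariance) coincide. For the main point, the paper proves directly that $\iota$ is a \emph{closed map}: for an arbitrary closed $A\subseteq X$ it computes $\pi^{-1}\bigl(\iota(A)\bigr)=\{(h^{-1},ha)\mid h\in H,\ a\in A\}$ and invokes the preceding proposition in its full strength; since a continuous injective closed map is automatically a closed embedding, nothing further is required. You instead prove only that the full image $\iota(X)$ is closed --- and for that step the preceding proposition is overkill, since $\pi^{-1}\bigl(\iota(X)\bigr)=H\times X$ is closed simply because $H$ is closed in $G$ --- and then you establish the embedding property separately, by descending $\sigma(h,x)=hx$ through the orbit projection restricted to the saturated set $H\times X$ to obtain a continuous left inverse $\bar\sigma$ of $\iota$. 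Your key verification is sound: for a $\pi$-saturated set $S$ one has $\pi(V\cap S)=\pi(V)\cap\pi(S)$ for every open $V$, so the restriction of the open map $\pi$ to $S$ is open onto its image and hence a quotient map. The paper's approach is shorter once one observes that closed plus injective plus continuous already yields an embedding; your approach buys an explicit retraction of $\iota(X)$ onto $X$ and avoids quantifying over all closed subsets of $X$, at the cost of the extra quotient-map argument you correctly identified as the one nontrivial step.
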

 
 \begin{proof}
  Since  $\iota$ is the composition $X \stackrel{j}{\rightarrow} G \times X
 \stackrel{p}{\rightarrow} G \times_H X$, where $j$ is the closed embedding $j(x)=(e,x)$ and $p$ is 
 the $H$-orbit map, we infer that $\iota$ is continuous.
 
 Let $A$ be a closed subset of $X$. To prove that $\iota (A)$ is closed in $G \times_H X$ it suffices to prove that the inverse image $p^{-1}\big
 (\iota(A)\big)$ is closed in $G\times X$, where $p:G\times X\to G\times_HX$ is the $H$-orbit map. We have that
 $$p^{-1}\big(\iota(A)\big)=\{(g, x)\in G\times X \mid [g, x]=[e, a] \ \text{for some} \ a\in A\}.$$
 But the equality $[g, x]=[e, a]$ means that $(g, x)=(h^{-1}, ha)$ for some $h\in H$. Consequently, 
  $$p^{-1}\big(\iota(A)\big)=\{(h^{-1}, ha)\in G\times X  \mid h\in H, \ a\in A\},$$
  which, by Proposition \ref{closed}, is closed in $G\times X$. Thus, $\iota$ is a closed map.
  
  Further, the map  $\iota$ is injective since
 \[ [e,x]=[e,y] \Longleftrightarrow (e,y)=(eh^{-1},hx) \quad \text{for some} \; h \in H \]
 and, in this case, $h=e$, so $y=x$. Hence, $\iota$ is a closed embedding.

 If $x \in X$ and $h \in H$, then 
 \[ \iota(hx) = [e, hx] = [h, x] = h[e,x] = h\iota(x)   \]
 showing that  $\iota$ is $H$-equivariant, as required.  
 \end{proof}

 \begin{proposition}\label{homeo}
 Let $G$ be a topological group and $H$ any subgroup of  $G$. If $X$ is an $H$-space, then 
 the $G$-orbit space $({G \times_H X})/G$ is homeomorphic to the $H$-orbit space $X/H$.
 \end{proposition}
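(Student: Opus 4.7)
The plan is to exhibit mutually inverse continuous maps between $X/H$ and $(G \times_H X)/G$. Let $\pi_H : X \to X/H$ denote the $H$-orbit map, let $p : G \times X \to G \times_H X$ denote the projection to the twisted product, and let $\pi_G : G \times_H X \to (G \times_H X)/G$ denote the $G$-orbit map.

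First I would define $\Phi : X/H \to (G \times_H X)/G$ by $\Phi(H(x)) = \pi_G([e,x])$ and $\Psi : (G \times_H X)/G \to X/H$ by $\Psi(\pi_G([g,x])) = H(x)$. To see $\Phi$ is well-defined, I note that for $h \in H$ one has $[e, hx] = [h, x] = h[e, x]$ in $G \times_H X$, so both points lie in the same $G$-orbit. For $\Psi$, if $\pi_G([g_1, x_1]) = \pi_G([g_2, x_2])$ then $[g'g_1, x_1] = [g_2, x_2]$ for some $g' \in G$, hence $(g'g_1, x_1) = (g_2 h^{-1}, h x_2)$ for some $h \in H$, giving $x_1 = h x_2$ and therefore $H(x_1) = H(x_2)$. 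A direct check shows $\Psi \circ \Phi = \mathrm{id}_{X/H}$ and $\Phi \circ \Psi = \mathrm{id}_{(G \times_H X)/G}$.

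For continuity of $\Phi$, I would observe that the composition $X \hookrightarrow G \times X \xrightarrow{p} G \times_H X \xrightarrow{\pi_G} (G \times_H X)/G$ sending $x \mapsto \pi_G([e, x])$ is continuous, and is constant on $H$-orbits (since $H \subset G$ implies that $[e,x]$ and $[e, hx] = h[e,x]$ lie in the same $G$-orbit); hence by the universal property of the quotient map $\pi_H$ it descends to a continuous map $X/H \to (G \times_H X)/G$, which is precisely $\Phi$. For continuity of $\Psi$, the map $G \times X \to X/H$ given by $(g, x) \mapsto H(x)$ is continuous and $H$-invariant with respect to the twisted action ($h(g,x) = (gh^{-1}, hx) \mapsto H(hx) = H(x)$), so it descends through $p$ to a continuous map $G \times_H X \to X/H$; this map is $G$-invariant by construction ($g'[g, x] = [g'g, x] \mapsto H(x)$), so it descends further through $\pi_G$ to a continuous map which is $\Psi$.

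There is no real obstacle here; the statement is essentially a bookkeeping exercise with the twisted action $h(g,x) = (gh^{-1}, hx)$ and the iterated universal properties of quotient maps. The only point that requires a bit of care is verifying the identity $[e, hx] = [h, x]$ in $G \times_H X$, which rests on recognising that $h^{-1} \cdot (e, hx) = (h, x)$ under the twisted $H$-action; once this is observed, both the well-definedness of $\Phi$ and the $H$-invariance of the auxiliary map used to build $\Psi$ become transparent.
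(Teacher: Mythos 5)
Your proof is correct. The downward map you call $\Psi$ is exactly the map the paper uses (there denoted $r$, induced by $[g,x]\mapsto H(x)$), and your descent arguments through the various quotient maps match the paper's. Where you diverge is in how the homeomorphism property is established: the paper never writes down an inverse. Instead it observes that the projection $G\times X\to X$ is an \emph{open} map, hence the induced map $\alpha:G\times_H X\to X/H$ is open, hence the continuous bijection $r$ it induces on $(G\times_H X)/G$ is open and therefore a homeomorphism. You instead construct the explicit inverse $\Phi(H(x))=\pi_G([e,x])$, verify its well-definedness via the identity $[e,hx]=[h,x]=h[e,x]$, and check its continuity by descending the composite $X\to G\times X\to G\times_H X\to (G\times_H X)/G$ through $\pi_H$. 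Your route is slightly longer but entirely elementary --- it needs only the universal property of quotient maps and no openness considerations --- whereas the paper's is shorter once one invokes the standard fact that open surjections pass to open maps on orbit spaces. Both are complete; the identity $[e,hx]=[h,x]$ that you flag as the one delicate point is indeed the crux of the well-definedness of $\Phi$, and you verify it correctly from the twisted action $h(g,x)=(gh^{-1},hx)$.
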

 \begin{proof}
 The projection  $\pi: G \times X \to X$ is a continuous open map which induces a continuous open map 
  $\alpha:G \times_H X \to X/H, \; [g,x] \mapsto H(x)$ between $H$-orbit spaces.
               
Observe  that $\alpha$ is constant on the $G$-orbits of the $G$-space   $G \times_H X$. Indeed 
for every  $g' \in G$ one has  $\alpha(g'[g,x])=\alpha([g'\cdot g,x])=H(x)=\alpha([g,x])$, as required. Then $\alpha$ induces a continuous  bijective map
 $r:({G \times_H X})/G \to X/H$ which makes to commute the following diagram:

              \[  \xymatrix{
              G \times_H X \ar[r]^\alpha \ar[d] \ar[d]_q & X/H, \\
              \frac{G \times_H X}{G}  \ar@{-->}[ur]^r  &
              }   \] 
    where $q$ is the $G$-orbit map that sends $[g, x]$ to its orbit $G([g, x])$.     Since $\alpha$ is open, it follows from the commutativity of the diagram that  the induced map $r$ also is open, and hence, it is the desired homeomorphism.     
              
    \end{proof}

 \begin{proposition}\label{retract}
 Let $G$ be a topological group and $H$ a closed subgroup of  $G$. If $X$ is an $H$-space, then 
 the $H$-orbit space $X/H$ is homeomorphic to a retract of the $H$-orbit space $({G \times_H X})/H$.
  \end{proposition}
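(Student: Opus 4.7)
The plan is to build the retraction directly out of the two maps supplied by the preceding propositions. The embedding $X/H\hookrightarrow(G\times_H X)/H$ will come from the closed $H$-equivariant map $\iota\colon X\to G\times_H X$, $\iota(x)=[e,x]$, of Proposition~\ref{closed}, passed to $H$-orbit spaces. The retraction $(G\times_H X)/H\to X/H$ will come from the continuous surjection $\alpha\colon G\times_H X\to X/H$, $[g,x]\mapsto H(x)$, already introduced in the proof of Proposition~\ref{homeo}, once I observe that $\alpha$ is invariant not only under the action that defines the twisted product but also under the residual left $H$-action $h\cdot[g,x]=[hg,x]$ coming from restricting the natural $G$-action on $G\times_H X$.

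First I would let $\bar\iota\colon X/H\to(G\times_H X)/H$ be the map induced by $\iota$ on $H$-orbit spaces; this is well defined and continuous because $\iota$ is an $H$-equivariant continuous map. Next, using $h\cdot[g,x]=[hg,x]$, I would verify $\alpha(h\cdot[g,x])=H(x)=\alpha([g,x])$, so that $\alpha$ factors through the $H$-orbit projection and yields a continuous map $\beta\colon(G\times_H X)/H\to X/H$. The crucial identity is then the one-line verification
\[
\beta\bigl(\bar\iota(H(x))\bigr)=\beta\bigl(H([e,x])\bigr)=\alpha([e,x])=H(x),
\]
so that $\beta\circ\bar\iota=\mathrm{id}_{X/H}$.

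From this identity two conclusions follow simultaneously: $\bar\iota$ is a topological embedding, since any continuous map admitting a continuous left inverse is automatically a homeomorphism onto its image; and $\beta$ is then a continuous retraction of $(G\times_H X)/H$ onto $\bar\iota(X/H)$, which is canonically homeomorphic to $X/H$. I do not anticipate any real obstacle; the only point deserving a careful line is the formula for the left $H$-action on the twisted product, which is simply the restriction of the natural $G$-action and is exactly what lets $\alpha$ descend to the $H$-orbit space. The closedness of $\iota$ proved in Proposition~\ref{closed} is not logically needed for the retract conclusion itself, but it yields as a bonus that $\bar\iota(X/H)$ is actually a closed subspace of $(G\times_H X)/H$.
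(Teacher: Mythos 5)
Your proposal is correct and follows essentially the same route as the paper: the same embedding $x\mapsto[e,x]$ passed to $H$-orbit spaces, and the same projection-induced map $[g,x]_H\mapsto H(x)$ (the paper's $f$ is your $\beta$), with the paper's retraction $r$ being exactly the composite $\bar\iota\circ\beta$ that your identity $\beta\circ\bar\iota=\mathrm{id}_{X/H}$ produces. Your closing remark that closedness of $\iota$ is not needed for the retract conclusion is also accurate.
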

 \begin{proof}By Proposition \ref{closed}, the map $\iota: X\hookrightarrow G\times_HX$, $x\mapsto [e, x]$ is a closed
 $H$-embbeding. This induces a closed embedding 
 $\tilde \iota: X/H\hookrightarrow (G\times_HX)/H$. Consequently, it suffices to prove that the image $\mathcal Im\,  \tilde \iota $ is a retract of $(G\times_HX)/H$.

  For every $[g, x]\in G \times_H X$ we will denote by $[g, x]_H$ the $H$-orbit in the $G$-space $G \times_H X$. 
  Clearly,  $\mathcal Im\,  \tilde \iota =\{[e, x]_H \mid x\in X\}.$

 Define a map $r: (G\times_HX)/H\to \mathcal Im\,  \tilde \iota $ \  by the rule:  $r: [g, x]_H\mapsto [e, x]_H$.  This map is well defined since for any 
 $h\in H$ one has 
 $$r: [gh^{-1}, hx]_H\mapsto [e, hx]_H=[h, x]_H= (h[e, x])_H= [e, x]_H,$$
 as required. 
 
The projection $G\times X\to X$  is $H$-equivariant and thus induces a continuous map $(G\times_HX)/H\to X/H$. 
This is given by $[g, x]\mapsto H(x)$,  and hence, factors as a continuous map $f:  \frac{G \times_H X}{H} \longrightarrow     X/H$, \ $[g, x]_H\mapsto H(x)$.
  
 The continuity of $r$ follows from the fact that it is the composition of the following two continuous maps:
   $$ \frac{G \times_H X}{H} \stackrel{ f}{\longrightarrow}     X/H  \stackrel{  \tilde \iota}{\longrightarrow}     \mathcal Im\,  \tilde \iota, $$ 
   
$$[g, x]_H \mapsto H(x) \mapsto [e, x]_H.$$    
    
    \medskip
        
 Besides, if $[e, x]_H \in \mathcal Im\,  \tilde \iota$,  then $r\big([e, x]_H\big)=  [e, x]_H$, so $r$ is the desired retraction.
  Thus, $X/H$ is homeomorphic to  $\mathcal Im\,  \tilde \iota$ \ which is  a retract of the $H$-orbit space $({G \times_H X})/H$, as required.             
  \end{proof}
    
    \medskip
    
    Now, as a simple combination of Propositions  \ref{homeo} and \ref{retract}, we get  Proposition \ref{retractdirect} already stated in Section \ref{intro}.

\medskip

  We conclude the paper with the following conjecture.
  
  \begin{conjecture}Let $G$ be a locally compact  group, $K$ a compact large  subgroup of $G$, and $S$  a $K$-space. Then $S$ is a neighborhood $K$-equivariant retract of the twisted product $G\times_K S$.  
    \end{conjecture}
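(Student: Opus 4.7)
The plan is to reduce the conjecture to Proposition~\ref{P:1} (the Lie group case) via the Gleason--Yamabe structure theory for locally compact groups. A convenient first step is the following reformulation of the conjecture: it is equivalent to asserting the existence of a $K$-bi-invariant open neighborhood $W$ of $K$ in $G$ (where $K\times K$ acts on $G$ by $(k_1,k_2)\cdot g = k_1 g k_2^{-1}$) together with a $K$-bi-equivariant retraction $\sigma:W\to K$ extending the identity on $K$. Indeed, given such a $\sigma$, the formula $[g,s]\mapsto [e,\sigma(g)\cdot s]$ defines a $K$-equivariant retraction of the $K$-invariant open set $p(W\times S)\subset G\times_K S$ onto $\iota(S)$, where $\iota(s)=[e,s]$; conversely, specializing to $S=K$ under the identification $G\times_K K\cong G$ recovers such a $\sigma$ from any neighborhood retraction onto $\iota(S)$. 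The task therefore becomes the construction of $\sigma$.

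Since the statement is local near $K$, one may replace $G$ by any open almost connected subgroup $G'\supset K$, noting that $G'\times_K S$ is open in $G\times_K S$. By the Gleason--Yamabe theorem, the almost connected group $G'$ admits arbitrarily small compact normal subgroups $N\triangleleft G'$ with $G'/N$ a Lie group. The largeness of $K$ in $G$ is precisely the hypothesis that permits choosing such an $N$ inside $K$; this is a structural consequence of the equivariant extension characterizations of compact large subgroups referenced after Theorem~\ref{T:331}. Setting $\tilde G := G'/N$ and $\tilde K := K/N$, a compact subgroup of the Lie group $\tilde G$, Proposition~\ref{P:1} in its reformulated form yields a $\tilde K$-bi-invariant open neighborhood $\tilde W$ of $\tilde K$ in $\tilde G$ together with a $\tilde K$-bi-equivariant retraction $\tilde\sigma:\tilde W\to \tilde K$.

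The remaining and main task is to lift $\tilde\sigma$ to a $K$-bi-equivariant retraction $\sigma:W\to K$, where $W := \pi^{-1}(\tilde W)\subset G'$ and $\pi:G'\to \tilde G$ is the quotient. The restriction $\pi|_K:K\to \tilde K$ is a principal $N$-bundle over a compact base and therefore admits a local section $s_0$ defined on a neighborhood $\tilde V$ of $e\tilde K$ in $\tilde K$ with $s_0(e\tilde K)=e$. One defines $\sigma$ initially on a small neighborhood of $e$ in $W$ by composing $s_0\circ\tilde\sigma\circ\pi$ with an appropriate correction accounting for the $N$-component of $g$ relative to $s_0$, and then extends to the remainder of $W$ by $K$-bi-equivariance. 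The principal obstacle, which I expect to be the delicate part of the argument, is the verification that this extension is well-defined, continuous, and restricts to the identity on $K$: the $N$-indeterminacy inherent in the lift must be resolved consistently across overlaps of the bi-$K$-translates of the initial neighborhood, which should follow from a careful interplay between the bi-$\tilde K$-equivariance of $\tilde\sigma$, the normality and compactness of $N$ in $G'$, and the structure of the local section $s_0$.
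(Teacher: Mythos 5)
This statement is not proved in the paper: it is stated explicitly as an open conjecture (going back to \cite[Question 4.4]{ant:proper}), with only the Lie case (Proposition \ref{P:1}) and the almost connected case (Proposition \ref{abels}) known. Your proposal must therefore stand entirely on its own, and it does not, because its decisive step is left unproven. After reducing to an open almost connected subgroup $G'\supset K$ and choosing a compact normal $N\trianglelefteq G'$ with $N\subset K$ and $G'/N$ Lie (this part is fine in substance: one may take $N=\bigcap_{g}gKg^{-1}$, and $G/N$ is Lie by Montgomery--Zippin since $G/K$ is a manifold), you must lift the retraction $\tilde\sigma$ from $\tilde G=G'/N$ back to $G'$. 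You do not carry this out; you describe what would have to be checked and say you ``expect'' it to follow from a ``careful interplay.'' That lifting is exactly where the whole difficulty of the conjecture is concentrated, so what you have is a reduction of an open problem to its own hard core, not a proof.

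Concretely, the lift fails for identifiable reasons. The ``correction'' you invoke is an $N$-valued continuous function that must extend $k\mapsto s_0(\pi(k))^{-1}k$ from a piece of $K$ to a neighborhood in $G'$; but $N$ is a compact group that need not be Lie, hence need not be an {\rm ANR}, so such extensions (and even the local section $s_0$ of $K\to K/N$) are not available by general principles and would have to come from structure you have not identified. Likewise, ``extending by bi-$K$-equivariance'' requires verifying that $k_1g_0k_2=k_1'g_0'k_2'$ forces $k_1\sigma(g_0)k_2=k_1'\sigma(g_0')k_2'$ on overlaps of bi-translates of the initial neighborhood; this is a genuine constraint on the ad hoc local definition, not a formality. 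Finally, your appeal to ``Proposition \ref{P:1} in its reformulated form'' is unjustified: your claimed equivalence only goes one way. A bi-$K$-equivariant $\sigma:W\to K$ does yield a neighborhood retraction $[g,s]\mapsto[e,\sigma(g)s]$ for every $S$ (right equivariance of $\sigma$ is what makes this well defined), but specializing to $S=K$ recovers only a \emph{left}-$K$-equivariant retraction, so the bi-equivariant statement for the Lie group $\tilde G$ is not a consequence of Proposition \ref{P:1} as stated; it is true (via an invariant tubular neighborhood of $\tilde K$ in $\tilde G$ for a left-invariant, right-$\tilde K$-invariant metric), but that needs its own argument. In short, the approach is a reasonable strategy, but the proposal does not resolve the conjecture.
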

    
    This conjecture first appeared in \cite[Question 4.4]{ant:proper} in the  form of a question. 
    Note that this is true for any Lie group $G$ (see Proposition \ref{P:1})  and for any  almost connected group $G$ (see Proposition \ref{abels}).
    The validity of this conjecture will allow us to extend the proof of Theorem \ref{T:00} to the case of proper actions of arbitrary locally compact groups.

\bibliographystyle{amsplain}

\bibliography{triquot}

\begin{thebibliography}{1110}

\bibitem{abe:74} H. Abels, {\em  Parallelizability of proper actions, global $K$-slices and maximal compact subgroups},  Math. Ann. {\bf  212} (1974),  1--19.

\bibitem{abe:78}
H. Abels, {\em  A universal proper $G$-space},  Math. Z. {\bf  159} (1978),  143-158.

\bibitem{ant:aspects}
 S.~A.~Antonyan, {\em An equivariant theory of retracts}, in: Aspects of Topology (In memory of Hugh Dowker), London Mat. Soc. Lecture Notes Ser., vol. 93, Cambrige Univ. Press, UK, 1985, pp. 251-269.

\bibitem{ant:87}
 S.~A.~Antonyan, {\em  Equivariant embeddings into $G$-$AR's$}, Glasnik Matemati\v cki {\bf 22 (42)} (1987)  503--533.

\bibitem{ant:88}  S.~A.~Antonyan, {\em  Retraction properties of the orbit space} (in Russian), Matem. Sbornik {\bf 137(3)} (1988),  300-318; English translat.: Math. USSR Sbornik  {\bf 65(2)} (1990) 305-321.


\bibitem{ant:preserv}  S.~A.~Antonyan, {\em  Preservation of $k$-connectedeness and
local  $k$-connectedeness by a symmetric n-th power functor}, Moscow Univ. Math. Bull. {\bf 49}
(1994), 22-25.

\bibitem{ant:94}  S.~A.~Antonyan, {\em  Existence of a slice for arbitrary compact transformation groups} (in Russian).  Matematicheskie Zametki {\bf 56:5} (1994),  3-9: English translat.:  Math. Notes {\bf 56 (5-6)} (1994)  1101-1104.


\bibitem{ant:98}
 S.~A.~Antonyan, {\em  The Banach-Mazur compacta are absolute retracts}, 
 Bull. Acad. Polon. Sci. Ser. Math. {\bf  46} (1998) 113-119.

\bibitem{ant:99} S.~A.~Antonyan, {\em  Extensorial properties of orbit spaces of proper group actions},  Topology Appl. {\bf  98} (1999) 35-46.

\bibitem{ant:00}
 S.~A.~Antonyan, {\em  The topology of the Banach-Mazur compactum},    Fund. Math. {\bf 166, no. 3} (2000) 209-232.



\bibitem{ant:03}
 S.~A.~Antonyan, {\em West's problem on equivariant hyperspaces and Banach-Mazur compacta},   Trans. Amer. Math. Soc. {\bf 355, No. 8} (2003) 3379-3404.




\bibitem{ant:jap}
 S.~A.~Antonyan, {\em  Orbit spaces and unions of equivariant absolute extensors},  Topology Appl. {\bf 146-147} (2005) 289-315.

\bibitem{ant:proper}
 S.~A.~Antonyan, {\em  Orbit spaces of proper equivariant absolute extensors},  Topology Appl. {\bf 153} (2005) 698-709.
 
 
\bibitem{ant:fund09} S.~A.~Antonyan, {\em Proper actions of locally compact groups on equivariant absolute  extensors},  Fund. Math. 153 (2009), 698-709.
 
 
 \bibitem{ant:dikr} S.~A.~Antonyan, {\em Equivariant extension properties of coset spaces of locally compact groups and approximate slices}, Topology Appl. {\bf 159} (2012)  2235-2247.  
 
 
 
  \bibitem{ant:elena}  N. Antonyan,  S.  Antonyan,   E.  Mart\'in-Peinador,  {\em  Equivariant embeddings of metrizable proper $G$-spaces}, Topology Appl. {\bf 163} (2014)  11--24.  
   

 
 \bibitem{bas:94}  V.~N.  Basmanov, {\em Functors and equivariant absolute retracts}, Moscow Univ. Math. Bull. {\bf 49, no. 4} (1994)   24-26.


\bibitem{bre:72} 
 G.~Bredon, {\em  Introduction to compact transformation groups},
 Academic Press,  New York-London, 1972.



\bibitem{dy:pp}
J. Dydak {\em Extension dimension for paracompact spaces},  Topology Appl. {\bf 140} (2004) 227-243.



\bibitem{ha:52}
 O.~Hanner, {\em  Retraction and extension of mappings of metric and non-metric spaces}, Arkiv f\" or Math. {\bf 2} (1952), 315-360.



\bibitem{huse:94} D.~Husemoller {\em  Fiber bundles},  Springer-Verlag, New York-Berlin-Heidelberg,
1994.





\bibitem{pal:60} R.~Palais, {\em  The classification of $G$-spaces}, Memoirs of the AMS, {\bf 36} (1960).

\bibitem{pal:61} R.~Palais,
{\em  On the existence of slices for actions of non-compact Lie groups},
Ann. of Math. {\bf  73}, (1961)  295-323.

\bibitem{zhur:90}  T. F. \v Zuraev, {\em On covariant functors of finite degree that preserve $A(N)R$-spaces (Russian)},  Comptes Rendus Acad. Bulgare Sci. {\bf 43, no. 9} (1990) 5-8.

\end{thebibliography}
\end{document}